\newtheorem{theorem}{Theorem}
\newtheorem{lemma}{Lemma}
\newtheorem{proposition}{Proposition}
\newtheorem{corollary}{Corollary}
\theoremstyle{definition}
\newtheorem{example}{Example}
\newtheorem{remark}{Remark}
\numberwithin{equation}{section} \numberwithin{lemma}{section}
\numberwithin{theorem} {section} \numberwithin{remark} {section}
\numberwithin{proposition} {section} \numberwithin{corollary} {section}
\newcommand{\Sob}[2]{W^{1,#1}_0\def\next{#2}\ifx\next\empty\else(#2)\fi}
\newcommand{\SobpO}{\Sob{p}{\Omega}}
\newcommand{\Sobdual}[1]{W^{-1,p'}\def\next{#1}\ifx\next\empty\else(#1)\fi}
\newcommand{\SobdualO}{\Sobdual{\Omega}}
\let\mappaa\mapsto
\renewcommand{\mapsto}{\quad\mappaa\quad}
\newcommand{\FFF}{\mathcal F}
\newcommand\chiusura[1]{L^{#1,\infty}_0}
\newcommand{\dist}{\mathrm{dist}}
\newcommand{\fff}{\varphi}
\newcommand{\ee}{\varepsilon}
\newcommand{\ds}{\displaystyle}
\newcommand{\dx}{\,\mathrm d }
\newcommand{\Om}{\Omega}
\newcommand{\reale}{{\mathbb R}}
\newcommand{\naturale}{{\mathbb N}}
\newcommand{\Rn}{\reale^N}
\newcommand{\divergenza}{\mathop{\mathrm{div}}}
\begin{document}

\title[Noncoercive quasilinear equations]{Noncoercive quasilinear  elliptic operators with  singular lower order terms}
\author[Farroni, Greco, Moscariello, Zecca]{Fernando Farroni, Luigi Greco,\\Gioconda Moscariello and Gabriella Zecca}
\address{Dipartimento di Ingegneria Elettrica e delle Tecnologie dell'Informazione, Universit\`a degli Studi di Napoli ``Federico~II'', Via Claudio~21, 80125 Napoli, Italy}\email{luigreco@unina.it}
\address{Dipartimento di Matematica e Applicazioni ``R. Caccioppoli'', Universit\`a degli Studi di Napoli ``Federico~II'', Via Cintia, 80126 Napoli, Italy}\email{fernando.farroni@unina.it} \email{gmoscari@unina.it} \email{g.zecca@unina.it}
\thanks{The authors are members of Gruppo Nazionale per l'Analisi Matematica, la Probabilit\`a e le loro Applicazioni (GNAMPA) of INdAM. The research of G.M. has been partially supported by the National Research Project PRIN \lq\lq Gradient flows, Optimal Transport and Metric Measure Structures'', code~2017TEXA3H}


\keywords{Dirichlet problems,  Noncoercive elliptic operators, Obstacle problems}
\subjclass[2020]{35J25, 35J87}

\maketitle

\begin{abstract}
We consider a family of quasilinear second order elliptic differential operators which are not coercive and are defined by functions in Marcinkiewicz spaces. We prove the existence of a solution to the corresponding Dirichlet problem. The associated 
obstacle problem is also solved. Finally, we show higher integrability of a solution to the Dirichlet problem when the datum is more regular.
\end{abstract}
%
%
%

\section{Introduction}

Given a bounded domain 
$\Omega$ of  $\mathbb R^N$, $N\ge  2$, we consider
\[A\colon (x,u,\xi)\in \Omega\times \reale\times \mathbb R^N\mapsto \mathbb R^N\]
a Carath\'eodory vector field (i.e.~measurable in $x\in \Omega$ and continuous in $(u,\xi)\in \reale\times \mathbb R^N$) 
satisfying for a.e.\ $x\in\Om$ and  for every $u\in\reale$ and $\xi,\eta\in\Rn$ the following structural conditions:
\begin{equation}\label{coercivita}
\langle A(x,u,\xi),\xi\rangle\geqslant \alpha|\xi|^p-\big(b(x)\,|u|\big)^p-\fff(x)^{p}
\end{equation}
\begin{equation}\label{limitatezza}
|A(x,u,\xi)|\leqslant \beta|\xi|^{p-1}+\big(b(x)\,|u|\big)^{p-1}+\fff(x)^{p-1}
\end{equation}
\begin{equation}\label{monotonia}
\langle A(x,u,\xi)- A (x,u,\eta ), \xi -\eta \rangle>0\qquad \text{for }\xi\ne\eta
\end{equation}
where $0< \alpha < \beta $ are positive constants, $1<p<N$, and $b$ and $\fff$ are positive functions verifying
$b\in L^{N,\infty}(\Omega)$ and $\fff\in L^{p}(\Omega)$. In view of Sobolev embedding theorem in Lorentz spaces \cite{O,A,GM}, by \eqref{limitatezza} and the assumptions on $b$ and $\fff$, for each $u\in\SobpO$ we have
\[A(x,u,\nabla u)\in L^{p'}(\Omega,\mathbb R^N)\]
Hence, we can define the quasilinear elliptic distributional  operator
\begin{equation}\label{loperatore}
-\divergenza A(x,u,\nabla u)
\end{equation}
setting for any $w\in \SobpO$
\begin{equation}
\langle -\divergenza A(x,u,\nabla u),w\rangle =\int_\Omega \langle A(x,u,\nabla u),\nabla w\rangle \dx x\,.
\end{equation}
Given $\Phi\in\SobdualO$, we study the Dirichlet problem
\begin{equation}\label{Dirichlet}
\left\{\begin{array}{c}
           -\divergenza A(x,u,\nabla u)= \Phi\vspace{7pt}\\
          u\in \SobpO
         \end{array}
\right.
\end{equation}
By a solution to Problem \eqref{Dirichlet} we mean a function $u \in W_0^{1,p}(\Omega)$ such that
\begin{equation}\label{soluzint}
 \int_\Omega   A(x,u,\nabla u)\nabla w \dx x
= 
 \langle 
\Phi , w 
 \rangle\,,
 \qquad \forall w \in C^\infty_0(\Omega)\,,
\end{equation}
where  $\langle\cdot,\cdot \rangle$ denotes the duality product of 
$  W ^{-1,p^\prime}(\Omega)$
and
$  W_0^{1,p}(\Omega)$. Clearly, \eqref{soluzint} extends to all $w \in W_0^{1,p}(\Omega)$.\par
Our structural conditions allow for a singular lower order term depending on $u$. As an example, we consider the following operator

\begin{equation}\label{modello}
A(x,u,\xi):=\langle \mathcal H(x)\xi , \xi \rangle ^{\frac {p-2} 2} \mathcal H(x) \xi + B(x) |u|^{p-2}u
\end{equation}
with $1<p<N$. Here $\mathcal H=\mathcal H(x)\colon \Omega \rightarrow \mathbb R^{N \times N}$ is a symmetric, bounded  matrix field of  $\mathbb R^{N \times N}$ such that
\[
\langle \mathcal H(x)\xi , \xi \rangle \geq \alpha |\xi|^2 \quad \text{for a.e. $x \in \Omega$ and for all }\xi \in \mathbb R^N\,,
\]
for a given $\alpha>0$.
The vector field  $B\colon \Omega \rightarrow \mathbb R^{N}$ is a measurable function satisfying $|B(x)|\leqslant (b(x))^{p-1}$ for a.e.\ $x \in \Omega$ and for some $b \in L^{N, \infty }(\Omega)$.

\medskip

The feature of Problem \eqref{Dirichlet} is the lack of coercivity for the operator \eqref{loperatore}
and the singularity in the lower order term due to property of $b$. 
It is well known that, if the operator in (1.4)-(1.5) is coercive, then a solution to problem (1.6) exists. It can for instance be shown  by monotone operator theory \cite{LL,HB B,Br,BBM}. 

On the other hand, 
the existence of a bounded solution 
can be expected when $\Phi$ and $b$ are sufficiently smooth. For example, in the model case and 
even for the corresponding linear case, 
a solution to 
Problem \eqref{Dirichlet} is bounded whenever $\Phi$ and $b$ are in $W^{-1,p^\prime }(\Omega)$ and 
$L^{p^\prime }(\Omega,\mathbb R^N)$, respectively, with $p^\prime > N/(p-1)$ (see \cite{S,GMZ}).\par
The space $L^{N,\infty}(\Omega)$ is slightly larger than
$L^{N}(\Omega)$. Nevertheless, there are essential differences between the case $b \in L^{N}(\Omega)$ 
(\cite{BoBumi,Droniou}), or even $b \in L^{N,q}(\Omega)$ (\cite{M1})
with $ N \leqslant q < \infty$, and the case $b\in L^{N,\infty}(\Omega)$. 
In $  L^{N,\infty}(\Omega)$ bounded functions are not dense. Furthermore, in $  L^{N,\infty}(\Omega)$ 
the norm is not absolutely continuous,  namely a function can have large norm even if restricted to a set  with   small measure. 

\medskip

Our  first  result is the following
\begin{theorem}\label{main}
Let $\Phi\in\SobdualO$. Under the assumptions
\eqref{coercivita},
\eqref{limitatezza} and
\eqref{monotonia},
if
\begin{equation}\label{lndeboleinfty} 
\dist_{L^{N,\infty}} (b,L^\infty (\Omega)) < \frac {\alpha^\frac1p}{S_{N,p}}
\end{equation}
then Problem~\eqref{Dirichlet} admits  a solution.  
\end{theorem}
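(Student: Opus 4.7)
My plan is to proceed by approximation, uniform a priori estimates, and monotone passage to the limit, the standard scheme for nonlinear noncoercive Dirichlet problems; only the a priori estimate makes substantive use of \eqref{lndeboleinfty}.

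Using \eqref{lndeboleinfty} I decompose $b = b_1 + b_2$ with $b_1 \in L^\infty(\Omega)$ and $\|b_2\|_{L^{N,\infty}} < \alpha^{1/p}/S_{N,p}$, and pick $\varepsilon > 0$ so that $(1+\varepsilon)(S_{N,p}\|b_2\|_{L^{N,\infty}})^p < \alpha$. Set $A_n(x,u,\xi) := A(x, T_n(u), \xi)$ where $T_n$ is truncation at level $n$; the coercivity defect $(b\,|T_n(u)|)^p \leqslant n^p b^p$ now lies in $L^1(\Omega)$, since $L^{N,\infty}(\Omega) \subset L^p(\Omega)$ on bounded domains for $1<p<N$. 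Consequently $A_n$ fits the Leray--Lions framework (coercivity, growth from \eqref{limitatezza}, pseudo-monotonicity from \eqref{monotonia} and the Carath\'eodory property), so the approximate problem $-\divergenza A_n(x,u_n,\nabla u_n) = \Phi$ admits a solution $u_n \in \SobpO$.

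Testing the approximate equation by $u_n$ and invoking \eqref{coercivita} together with $|T_n(u_n)| \leqslant |u_n|$ yields
\[
\alpha\|\nabla u_n\|_p^p \leqslant \int_\Omega (b\,|u_n|)^p \dx x + \|\fff\|_p^p + \|\Phi\|_{\SobdualO}\|\nabla u_n\|_p.
\]
The elementary inequality $(b_1+b_2)^p \leqslant (1+\varepsilon) b_2^{\,p} + C_\varepsilon b_1^{\,p}$, combined with the Lorentz--H\"older inequality and the Sobolev--Lorentz embedding $\|v\|_{L^{p^*,p}} \leqslant S_{N,p}\|\nabla v\|_p$, bounds the $b_2$-contribution by $(1+\varepsilon)(S_{N,p}\|b_2\|_{L^{N,\infty}})^p \|\nabla u_n\|_p^p$, which is absorbed on the left. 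The residual $C_\varepsilon \int(b_1|u_n|)^p$ is treated by a level-set decomposition $\{|u_n|\leqslant k\}\cup\{|u_n|>k\}$: the low part is dominated by $k^p \|b_1\|_\infty^p |\Omega|$, while on the high part one uses $\|b_1 \chi_{\{|u_n|>k\}}\|_{L^{N,\infty}} \leqslant \|b_1\|_\infty |\{|u_n|>k\}|^{1/N}$ (absolute continuity of the $L^{N,\infty}$-norm on $L^\infty$ functions) together with a Sobolev--Markov estimate to make the effective coefficient small for $k$ sufficiently large; optimising in $k$ closes the uniform bound $\|\nabla u_n\|_p \leqslant C$. Then $u_n \rightharpoonup u$ in $\SobpO$, $u_n \to u$ in $L^p(\Omega)$ and a.e.\ by Rellich, and $A(x,T_n(u_n),\nabla u_n) \rightharpoonup \chi$ in $L^{p'}(\Omega,\Rn)$ by \eqref{limitatezza}; Minty's monotonicity trick built on \eqref{monotonia} with test functions of the form $u_n - v$ identifies $\chi = A(x,u,\nabla u)$ a.e., so $u$ solves \eqref{Dirichlet}.

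\textbf{Main obstacle.} The pivotal and technically delicate step is the uniform a priori estimate, specifically the control of $\int(b_1|u_n|)^p$: a naive Poincar\'e bound produces a coefficient $C_\varepsilon \|b_1\|_\infty^p C_P^p$ which need not be smaller than $\alpha$ and therefore cannot be absorbed by the coercive $\alpha\|\nabla u_n\|_p^p$. The way around is to exploit the absolute continuity of the $L^{N,\infty}$-norm restricted to bounded functions --- a property that is \emph{not} shared by general elements of $L^{N,\infty}(\Omega)$, and which is precisely why the hypothesis is formulated in terms of $\dist_{L^{N,\infty}}(b,L^\infty)$ rather than $\|b\|_{L^{N,\infty}}$ itself.
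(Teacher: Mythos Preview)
Your overall scheme (approximate, bound, pass to the limit) is correct, but two of the three steps contain genuine gaps.

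\textbf{The a priori estimate does not close.} After absorbing the $b_2$-contribution you are left with $c_0\|\nabla u_n\|_p^p\leqslant C_\varepsilon\int (b_1|u_n|)^p+\text{l.o.t.}$ Your level-set splitting gives, for the high part, a bound of the form $C\,|\{|u_n|>k\}|^{p/N}\|\nabla u_n\|_p^p$; but the Sobolev--Markov control $|\{|u_n|>k\}|\leqslant C k^{-p^*}\|\nabla u_n\|_p^{p^*}$ itself depends on the very quantity you are estimating. If you carry out the optimisation in $k$, the low and high parts balance precisely at $k^p\sim\|\nabla u_n\|_p^p$, and the resulting inequality reads $c_0 X\leqslant C'' X+\text{l.o.t.}$ with $X=\|\nabla u_n\|_p^p$ and $C''$ a fixed constant depending on $\|b_1\|_\infty$, $C_\varepsilon$, $S_{N,p}$ --- there is no reason for $C''<c_0$, so nothing is gained. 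The paper avoids this circularity by testing with the truncation $T_\sigma u_n$ and invoking Lemma~\ref{lemma weak compact}: that lemma shows, by a rescaling/contradiction argument, that the family of inequalities $\|\nabla T_\sigma u\|_p^p\leqslant C(1+\|T_\sigma u\|_p^p)$ for all $\sigma>0$ already forces a uniform $W^{1,p}$ bound. This is the missing ingredient.

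\textbf{Minty's trick does not apply as stated.} To run Minty you need, for fixed $v$, strong convergence of $A(x,T_n u_n,\nabla v)$ in $L^{p'}$. Via \eqref{limitatezza} this requires $(b\,T_n u_n)^{p-1}\to (b\,u)^{p-1}$ strongly in $L^{p'}$, i.e.\ $b\,u_n\to b\,u$ in $L^p$. H\"older would demand $b\in L^r$ and $u_n\to u$ in $L^s$ with $\tfrac1r+\tfrac1s=\tfrac1p$; the constraints $r<N$ (so that $L^{N,\infty}\subset L^r$) and $s<p^*$ (so that Rellich applies) are incompatible since $\tfrac1N+\tfrac1{p^*}=\tfrac1p$ is the borderline. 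Example~\ref{Ex2} exhibits exactly this failure. The paper overcomes it by testing with $\gamma(u_n-u)=\arctan(u_n-u)$: the factor $\gamma'(u_n-u)=(1+|u_n-u|^2)^{-1}$ tempers the critical term $(b|u_n-u|)^p$ enough to restore equi-integrability (see the argument around \eqref{compattezza}), after which one obtains $\nabla u_n\to\nabla u$ a.e.\ and concludes.
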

Here  $S_{N,p}$ denotes the Sobolev constant of Theorem 
\ref{lorentz} below. As an illustration, we state the following  immediate consequence. We denote by $\chiusura N(\Omega)$ the closure of $L^\infty(\Omega)$ in $L^{N,\infty}(\Omega)$.
\begin{corollary}\label{corollary}
Assume \eqref{coercivita},
\eqref{limitatezza} and
\eqref{monotonia}, with $b\in \chiusura N(\Omega)$.
Then Problem~\eqref{Dirichlet} admits  a solution, for every $\Phi\in\SobdualO$.  
\end{corollary}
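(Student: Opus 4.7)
The plan is to observe that Corollary~\ref{corollary} is a direct application of Theorem~\ref{main}, with the smallness condition \eqref{lndeboleinfty} being automatically satisfied. The crux is the topological characterization of $\chiusura{N}(\Omega)$: since by definition $\chiusura{N}(\Omega)$ is the closure of $L^\infty(\Omega)$ inside the quasi-Banach space $L^{N,\infty}(\Omega)$, membership of $b$ in this closure is equivalent to the vanishing of the distance
\[
\dist_{L^{N,\infty}}\bigl(b,\,L^\infty(\Omega)\bigr)=0.
\]

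First I would recall the general fact that a point in a metric (or quasimetric) space lies in the closure of a subset if and only if its distance to that subset is zero; this is just the definition of closure expressed via an approximating sequence $\{b_k\}\subset L^\infty(\Omega)$ with $\|b-b_k\|_{L^{N,\infty}}\to 0$. Applied to $b\in\chiusura{N}(\Omega)$, it yields $\dist_{L^{N,\infty}}(b,L^\infty(\Omega))=0$. Since $\alpha^{1/p}/S_{N,p}>0$, hypothesis \eqref{lndeboleinfty} of Theorem~\ref{main} holds trivially, and invoking that theorem produces the desired solution $u\in\SobpO$ to Problem~\eqref{Dirichlet}.

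There is no real obstacle here: the corollary is flagged by the authors as an \emph{immediate} consequence, and the proof is essentially a one-line reduction to Theorem~\ref{main}. The only point worth making explicit is that $L^{N,\infty}(\Omega)$, although only a quasi-Banach space, still carries a well-defined distance functional from any subset---obtained from its quasinorm, itself equivalent to a genuine norm by the standard Marcinkiewicz norm---so the equivalence between closure and vanishing distance holds in the usual fashion, and no further argument is required.
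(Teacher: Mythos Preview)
Your proposal is correct and matches the paper's approach exactly: the authors present the corollary as an ``immediate consequence'' of Theorem~\ref{main} and give no separate proof, the point being precisely that $b\in\chiusura N(\Omega)$ means $\dist_{L^{N,\infty}}(b,L^\infty(\Omega))=0$, so \eqref{lndeboleinfty} holds trivially. One minor remark: the paper actually treats $L^{N,\infty}(\Omega)$ as a genuine Banach space (via the equivalent norm, see Subsection~\ref{funct sp}), so your caveat about quasi-Banach structure is unnecessary, though harmless.
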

The closure $\chiusura N(\Omega)$ contains for example all Lorentz spaces $L^{N,q}(\Omega)$, for $1<q<+\infty$, see Subsection~\ref{funct sp}.
\par
It is not clear if the bound in \eqref{lndeboleinfty} is sharp. 
However, existence of a solution could fail if 
no restriction on  the distance is imposed, even in the liner case, 
as the Example
\ref{Ex1}
in Subsection \ref{example sec} shows
(see also \cite{GMZ3}).
Notice that condition \eqref{lndeboleinfty} does not imply the smallness of the norm of $b$ in 
$L^{N,\infty}(\Omega)$ (see \cite{GMZ3}).

In the case $p=2$ existence results analogous to that of Theorem  \ref{main} have been proved in 
\cite{GGM,GMZ3,Z} and in \cite{DiG Z,RZ,Z2} when the principal part has a coefficient  bound in BMO (i.e. the space of functions of bounded mean oscillation). 
We explicitly remark that in this 
context the operator \eqref{loperatore} has the same integrability properties of the principal part (see also~\cite{KK}). 
The evolutionary counterpart of Problem~\eqref{Dirichlet}
has been studied in~\cite{FarMos}. 
Other related results can be found in 
\cite{AlfMo,
KTAIPENG,MV}.

\medskip

An additional difficulty in proving Theorem~\ref{main} lies in the lack of compactness that  the operator
\begin{equation}\label{lack}
u\in \SobpO\quad \mapsto \quad \big(b(x)\,|u|\big)^{p-1}\in L^{p'}(\Omega)
\end{equation}
exhibits in the case $b\in L^{N,\infty}(\Omega)$,
in contrast with the case $b\in L^{N}(\Omega)$
(see Example 
\ref{Ex2}
 in Subsection \ref{example sec}).
In order to overcome this issue, first we consider the case in which $b\in L^\infty(\Omega)$. Under this assumption, we deduce the existence of a solution to  Problem~\eqref{Dirichlet} by means of Leray--Schauder fixed point theorem. The a priori estimate required follows from a lemma that could be interesting in itself (see 
Lemma \ref{lemma weak compact}
below).

\medskip

In order to  reduce the general case $b\in L^{N,\infty}(\Omega)$  to
the previous one, we consider a sequence of approximating problems, defined essentially  by truncating the vector field $A=A(x,u,\xi)$ in the $u$--variable.
A bound on the sequence of the solutions is achieved due to the assumption
\eqref{lndeboleinfty}. 

We emphasize
 that our result is also new when $b\in L^{N}(\Omega)$, in the sense that our approach allows us to treat the general nonlinear operator in 
\eqref{Dirichlet}.

Finally,  by testing the problems with a suitable admissible test functions, we show that the sequence of solutions to the approximating problems is compact and its limit is a solution to the original problem \eqref{Dirichlet}.

\medskip

In Section~\ref{pb ostacolo}, we show that our approach is robust enough to handle also the corresponding obstacle problem.   
We prove an existence result in the same spirit of \cite{GMZ2} (where the case $p=2$ is taken into account).

\medskip
In Section~\ref{reg sol} we present a regularity result.
When 
$b \in L^N(\Omega)$, 
the study of the higher integrability of a solution to \eqref{Dirichlet} 
has been developed in \cite{GG,Giustibook} 
by using the theory of quasiminima. Local summability properties have been recently considered in 
\cite{CDLP,KangKim}
in the linear case. 
Here, following
\cite{GMZ},
 we prove  higher  summability of a solution  $u$ to \eqref{Dirichlet} according to that of the  data $\Phi$ and $\fff$.

 \section{Preliminaries and examples}\label{section2}
\subsection{Notation and function spaces}\label{funct sp}
Let $\Om$ be a bounded domain in $\mathbb R^N$.
Given $1<p<\infty$ and $1\leqslant q<\infty$, the Lorentz space $L^{p,q}(\Om)$ consists of all measurable functions $f$ defined on $\Omega$ for which the quantity
\begin{equation}\label{norma Lorentz}
\|f\|_{p,q}^q=p\int_{0}^{+\infty} |\Om_t|^{\frac{q}{p}}t^{q-1}\dx t
\end{equation}
is finite, where $\Om_t= \left\{ x\in \Om: |f(x)|>t \right\}$ and $|\Om_t|$ is the Lebesgue measure of $\Om_t$, that is, $\lambda_f(t)=|\Om_t|$ is the distribution function of $f$. Note that $\|\cdot\|_{p,q}$  is equivalent to a norm and $L^{p,q}$ becomes a Banach space when endowed with it (see \cite{O,bs,g}). For $p=q$, the Lorentz space
$L^{p,p}(\Omega)$ reduces to the Lebesgue space $L^p(\Omega)$. For $q=\infty$, the class $L^{p,\infty}(\Omega)$ consists of all measurable functions $f$ defined on $\Omega$ such that
\begin{equation*}\label{2.1}
\|f\|^p_{p,\infty}=\sup_{t>0}t^{ p}|\Om_t|<+\infty
\end{equation*}
and it coincides with the Marcinkiewicz class, weak-$L^p(\Omega)$.

For Lorentz spaces the following inclusions hold
\[
L^r (\Om)\subset L^{p,q}(\Om)\subset  L^{p,r} (\Om) \subset L^{p,\infty}(\Om)\subset L^q(\Om),
\]
whenever $1\leqslant q<p<r\leqslant \infty.$ Moreover, for $1<p<\infty$, $1\leqslant q\leqslant \infty$ and $\frac 1 p+\frac 1 {p'}=1$, $\frac 1 q+\frac 1 {q'}=1$, if $f\in L^{p,q}(\Omega)$, $g\in L^{p',q'}(\Omega)$ we have the H\"{o}lder--type inequality
\begin{equation*}\label{holder}
\int_{\Omega}|f(x)g(x)|\dx x \leqslant \|f\|_{p,q}\|g\|_{p',q'}.
\end{equation*}

As it is well known, $L^\infty(\Om)$ is not dense in $L^{p,\infty}(\Om)$. 
For a function $f \in 
L^{p,\infty} (\Om)$ we define
\begin{equation}\label{dist}
\dist_{L^{p,\infty} (\Om) } (f,L^\infty (\Om))  =\inf_{g\in L^\infty(\Om)} \|f-g\|_{L^{p,\infty}(\Om)}.
\end{equation}
In order to characterize the distance in \eqref{dist}, 
we introduce for all  $k>0$ the truncation operator
\begin{equation*}\label{201206272}
T_k(y):=\frac{y}{|y|}\min\{|y|, k\}\,.
\end{equation*}
It is easy to verify that
\begin{equation}\label{distlim}
\lim_{k\to\infty}\|f-T_kf\|_{p,\infty} 
=
\dist_{L^{p,\infty} (\Om) } (f,L^\infty (\Om))\,.
\end{equation}
We denote by $\chiusura p(\Omega)$ the closure of $L^\infty(\Omega)$. We have (see \cite[Lemma~2.3]{GZ})
\begin{equation}\label{caratterizzazione chiusura}
f\in\chiusura p(\Omega)\iff \lim_{t\to+\infty}t\,[\lambda_f(t)]^{1/p}=0\,.
\end{equation}
Clearly, for $1\leqslant q<\infty$ we have $L^{p,q}(\Omega)\subset \chiusura p(\Omega)$, that is, any function in $L^{p,q}(\Omega)$ has  vanishing distance zero to $L^\infty(\Omega)$. Indeed, $L^\infty(\Omega)$ is dense in $L^{p,q}(\Omega)$, the latter being continuously embedded into $L^{p,\infty}(\Omega)$. Actually, the inclusion also follows from \eqref{caratterizzazione chiusura}, since $\lambda_f(t)=|\Omega_t|$ is decreasing and hence the convergence of the integral at \eqref{norma Lorentz} implies the condition on the right of \eqref{caratterizzazione chiusura}.\par
Assuming the origin $0\in\Omega$, a typical  element of $L^{N,\infty}(\Om)$ is $b(x)=B/|x|$, with $B$ a positive constant. An elementary calculation shows that
\begin{equation}\label{esempio-dist}
\dist_{L^{N,\infty} (\Om) } (b,L^\infty (\Om))=B\,\omega_N^{1/N}
\end{equation}
where $\omega_N$ stands for the Lebesgue measure of the unit ball of $\Rn$.\par\medbreak
The Sobolev embedding theorem in Lorentz spaces reads as
\begin{theorem}[\cite{O,A,GM}]\label{lorentz}
Let us assume that $1<p<N$, $1\leqslant q\leqslant p$, then every function $g\in W_0^{1,1}(\Omega)$ verifying $|\nabla g|\in L^{p,q}(\Om)$ actually belongs to $L^{p^*,q}(\Om)$, where $p^*=\frac{Np}{N-p}$ and
$$
\|g\|_{p^*,q}\leqslant S_{N,p}\|\nabla g\|_{p,q}
$$
where $S_{N,p}$ is the Sobolev constant.
\end{theorem}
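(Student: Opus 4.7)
The plan is to follow the classical rearrangement technique of Talenti and Alvino. By a standard density argument (approximating via truncations and mollifications, using that bounded functions are dense in $L^{p,q}$ for $q<\infty$) it is enough to treat the case $g\in C_c^\infty(\Omega)$. Throughout, denote by $\mu(t):=|\{|g|>t\}|$ the distribution function of $|g|$, let $g^*$ stand for its decreasing rearrangement on $(0,|\Omega|)$, and set $f:=|\nabla g|$ with rearrangement $f^*$.

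The first, and most delicate, step is to establish the Talenti--Alvino pointwise differential inequality
$$-(g^*)'(s)\,\leq\,\frac{f^*(s)}{N\omega_N^{1/N}\,s^{\,1-1/N}}\qquad\text{for a.e.\ }s\in(0,|\Omega|).$$
This follows on the regular values of $|g|$ (Sard's theorem) from the combination of three classical ingredients: (i) the coarea formula, which yields the identities
$$-\frac{d}{dt}\int_{\{|g|>t\}}|\nabla g|^p\,dx=\int_{\{|g|=t\}}|\nabla g|^{p-1}\,d\mathcal H^{N-1},\quad -\mu'(t)=\int_{\{|g|=t\}}\frac{d\mathcal H^{N-1}}{|\nabla g|};$$
(ii) the sharp isoperimetric inequality in $\mathbb R^N$, giving $\mathcal H^{N-1}(\{|g|=t\})\geq N\omega_N^{1/N}\mu(t)^{1-1/N}$; and (iii) H\"older's inequality on each level set, $\mathcal H^{N-1}(\{|g|=t\})^p\leq\bigl(\int|\nabla g|^{p-1}d\mathcal H^{N-1}\bigr)\bigl(\int |\nabla g|^{-1}d\mathcal H^{N-1}\bigr)^{p-1}$. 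The change of variables $s=\mu(t)$, which converts $-\mu'(g^*(s))$ into $(-(g^*)'(s))^{-1}$, together with the Hardy--Littlewood bound $\int_{\{|g|>g^*(s)\}}|\nabla g|^p\,dx\leq\int_0^s f^*(\sigma)^p\,d\sigma$ differentiated in $s$, rearranges these ingredients into the stated pointwise estimate.

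Integrating the pointwise inequality over $(s,|\Omega|)$ and using $g^*(|\Omega|)=0$ produces
$$g^*(s)\,\leq\,\frac{1}{N\omega_N^{1/N}}\int_s^{|\Omega|}\sigma^{-1+1/N}\,f^*(\sigma)\,d\sigma.$$
Recalling that $\|h\|_{p^*,q}^q\simeq q\int_0^{\infty}[s^{1/p^*}h^*(s)]^q\,ds/s$ and setting $\varphi(\sigma):=\sigma^{1/p}f^*(\sigma)$ (so that $\|\varphi\|_{L^q(d\sigma/\sigma)}\simeq\|\nabla g\|_{p,q}$), the estimate reduces to the one-dimensional Hardy inequality
$$\left\|\,s^{1/p^*}\!\int_s^{\infty}\sigma^{-1-1/p^*}\varphi(\sigma)\,d\sigma\,\right\|_{L^q(ds/s)}\,\leq\,p^*\,\|\varphi\|_{L^q(ds/s)},$$
valid in the range $1\leq q\leq p$ considered in the theorem. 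Collecting constants and identifying the product with the Sobolev constant $S_{N,p}$ then completes the proof.

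The main obstacle is the rigorous derivation of the Alvino pointwise differential inequality: $g^*$ is in general only of bounded variation, so the chain-rule identities relating $(g^*)'$, $\mu'$, and the coarea integrals must be justified first on the set of regular values of $|g|$ and then extended to a.e.\ $s$ by monotonicity. Once the pointwise bound is in place, the passage through the one-dimensional Hardy inequality and the bookkeeping of the Sobolev constant is essentially a routine computation.
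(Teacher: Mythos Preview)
The paper does not prove this theorem at all: it is stated as a known result and attributed to the references \cite{O,A,GM}, with no argument given. So there is no ``paper's own proof'' to compare against; your proposal supplies a proof where the paper simply quotes the literature.

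That said, your sketch is essentially the classical rearrangement argument of Alvino \cite{A}, which is one of the very references the paper cites. The outline---P\'olya--Szeg\H{o}/Talenti pointwise inequality for $g^*$ via coarea plus isoperimetry, followed by a one--dimensional Hardy inequality in the variable $s^{1/p^*}$---is the standard route and is correct in spirit. A couple of small points worth tightening if you want a self--contained proof: the H\"older step on level sets is cleanest with exponent $2$ (Cauchy--Schwarz) rather than $p$, yielding directly $-(g^*)'(s)\le (N\omega_N^{1/N})^{-1}s^{-1+1/N}\,\tfrac{d}{ds}\!\int_0^s f^*$, after which a further Hardy--Littlewood/maximal--function step gives the form you wrote with $f^*(s)$; and the Hardy inequality you invoke actually holds for all $1\le q<\infty$, not just $q\le p$, so the hypothesis $q\le p$ in the statement is not what drives that step. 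These are refinements rather than gaps.
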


\subsection{A version of the Leray--Schauder fixed point theorem}
We shall use the well known Leray--Schauder fixed point theorem in the following form (see \cite[Theorem 11.3 pg. 280]{gt}). A continuous mapping between two Banach spaces is called compact if the images of bounded sets are precompact. 
\begin{theorem}\label{LerSch2}
Let $\FFF$ be a compact mapping of a Banach space $X$ into itself, and suppose there exists a constant $M$ such that $\|x\|_{X}<M$ for all $x\in X$ and $t\in [0,1]$ satisfying $x=t\FFF(x).$ Then, $\FFF$ has a fixed point. 
\end{theorem}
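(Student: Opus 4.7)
My plan is to deduce this from Schauder's fixed point theorem via a truncation/retraction trick that converts the potentially unbounded action of $\FFF$ into a self-map of a closed ball, where Schauder applies directly. Concretely, I fix a radius $R > M$ and introduce the radial retraction $r\colon X \to \overline{B_R(0)}$ given by $r(x) = x$ if $\|x\|_X \leqslant R$ and $r(x) = R\,x/\|x\|_X$ otherwise; an elementary check shows this is continuous with image the closed ball $\overline{B_R(0)}$.

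The composition $\mathcal G := r \circ \FFF$ then maps $\overline{B_R(0)}$ into itself. Because $\FFF$ is compact and continuous and $r$ is continuous, $\mathcal G$ is a compact continuous self-map of the nonempty closed bounded convex set $\overline{B_R(0)}$. Schauder's fixed point theorem (proved classically by approximating the compact map by finite-dimensional ones and invoking Brouwer's theorem) therefore furnishes $x_0 \in \overline{B_R(0)}$ with $\mathcal G(x_0) = x_0$.

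The core step is to verify that this $x_0$ is actually a fixed point of $\FFF$, equivalently that $\|\FFF(x_0)\|_X \leqslant R$. I would argue by contradiction: if $\|\FFF(x_0)\|_X > R$, then
\[
x_0 = r(\FFF(x_0)) = \frac{R}{\|\FFF(x_0)\|_X}\,\FFF(x_0) = t\,\FFF(x_0),
\]
with $t := R/\|\FFF(x_0)\|_X \in (0,1)$. Hence the pair $(x_0, t)$ satisfies the homotopy equation of the hypothesis, forcing $\|x_0\|_X < M$; but from the displayed identity $\|x_0\|_X = R > M$, a contradiction. The only nontrivial ingredient is the appeal to Schauder's theorem (itself resting on Brouwer), which the authors invoke implicitly by citing \cite{gt}; the remainder of the argument is elementary and uses the hypothesis exactly once, at an intermediate parameter $t \in (0,1)$ rather than at $t = 1$, which is precisely why the assumption must be imposed along the full homotopy from $0$ to $\FFF$ and not only at fixed points of $\FFF$.
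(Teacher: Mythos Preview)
Your argument is correct. Note, however, that the paper does not supply its own proof of this statement: it is quoted as a well-known result with a reference to \cite[Theorem~11.3, p.~280]{gt}, and the proof given there is precisely the retraction-plus-Schauder argument you have written out.
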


\subsection{Critical examples}\label{example sec}
Our first example shows that the only assumption that $b \in L^{N,\infty} (\Omega)$
does not guarantee the existence of a solution to Problem \eqref{Dirichlet}. 
\begin{example}\label{Ex1}
Let $\Omega$ be the unit ball. For $\frac N2<\gamma+1\leqslant N$, the problem
\begin{equation}\label{controesempio}
\left\{
\begin{array}{cl}
\ds -\Delta u-\divergenza\left(\gamma\,u\,\frac x{|x|^2}\right)=-\divergenza\left(\frac x{|x|^{N-\gamma}}\right)&\text{ in }\Omega\\
\ds u=0&\text{ on }\partial\Omega
\end{array}
\right.
\end{equation}
does not admit a solution.
Assume to the contrary that  $u$ is a solution of \eqref{controesempio}. 
In the right hand side of the equation we recognize that
\[\frac x{|x|^{N-\gamma}}=\nabla v(x)\,,\]
where $v\in W^{1,2}_0(\Omega)$
 is given by
\[v(x)=\left\{
\begin{array}{ll}
\ds \frac1{2-N+\gamma}\,(|x|^{2-N+\gamma}-1)&\text{ for }\gamma\not=N-2\\
\ds \vrule width 0pt height 1.2em
\log|x|&\text{ for }\gamma=N-2
\end{array}
\right.\]
Moreover, $v$ solves the adjoint problem
\[\left\{
\begin{array}{cl}
\ds -\Delta v+\gamma\,\frac x{|x|^2}\cdot \nabla v=0&\text{ in }\Omega\\
\ds v=0&\text{ on }\partial\Omega
\end{array}
\right.\]
Testing the equation in \eqref{controesempio} by $v$ we have
\[\int_\Omega |\nabla v|^2\dx x=0\,.\] 
which readly implies $v \equiv 0$ in $\Omega$, which is clearly not the case.  \qed
\end{example}
Next example shows that for the complete operator
\[\divergenza A(x,u,\nabla u)+B(x,u,\nabla u)\]
in general we do not have existence, even in the linear case.
\begin{example}\label{operatore completo}
Let $\lambda$ be an eigenvalue of Laplace operator and $w$ a corresponding eigenfunction
\[\left\{
\begin{array}{ll}
\ds -\Delta w=\lambda\, w\\
\ds w\in\Sob2\Omega\setminus\{0\}
\end{array}
\right.\]
Then the equation
\[-\Delta u-\lambda\,u=w\]
has no solution of class $\Sob2\Omega$.
\end{example}
Our final example shows that  
compactness of the operator
\eqref{lack} in the Introduction could fail.
\begin{example}\label{Ex2}
Assume $N\geqslant 2$ and  $1<p<N$. Let $\Omega$ be the ball of $\mathbb R^N$ centered at the origin of radius $3$. 
Our aim is to construct a sequence of functions
$\{
u_n
\}_{n\in\mathbb N}$
in $W^{1,p}_0(\Omega)$ and a function $b\in L^{N,\infty}(\Omega)$
such that $\{
\nabla u_n
\}_{n\in\mathbb N}$ is bounded in $L^p(\Omega,\mathbb R^N)$, however it is not possible to extract from $\{(b|u_n|)^{p-1} \}_{n\in\mathbb N}$ any subsequence strongly converging  
$L^{p^\prime}(\Omega)$. 
To this aim, let $$ b(x):=\frac 1 {|x|} $$ and
\[
\gamma : = 1 - \frac N p\,.
\]
We define a sequence 
$\{
u_n
\}_{n\in\mathbb N}$
setting for $x \in \Omega$  
\begin{equation}\label{controesempio2}
\begin{split}
u_1(x) & :=
\left\{
\begin{array}{cl}
 1-2^\gamma &\text{if $|x|<1$}\vspace{7pt}\\
 |x|^\gamma -2^\gamma &\text{if $1 \leqslant |x| < 2$}\vspace{7pt}\\
 0 &\text{if $|x|\geqslant 2$}
\end{array}
\right.
\\
u_n (x)&  : = n^{-\gamma} u_1 (nx) \qquad \text{for $n\geqslant 2$}  
\end{split}
\end{equation}
Observe that 
$u_n  \in W^{1,p}_0(\Omega)$ since 
\begin{equation}\label{contr2.1}
\begin{split}
 | \nabla u_n(x)  | =
\left\{
\begin{array}{cl}
 |\gamma |  
 |x|^\gamma  &\text{if $\frac 1 n \leqslant |x| < \frac 2 n$}\vspace{7pt}\\
 0 &\text{otherwise}
\end{array}
\right.
\end{split}
\end{equation}
and 
\begin{equation}
\| \nabla u_n\|^p_{L^p(\Omega)} = |\gamma|^p N \omega_N \log 2\,,
\end{equation}
where $\omega_N$ denotes the measure of the unit ball of $\mathbb R^N$.
In particular, $\| \nabla u_n\|^p_{L^p(\Omega)}$ is independent of $n$.
On the other hand, a direct calculation shows that
\begin{equation}
\begin{split}
\left\|
\left(
b|u_n|
\right)^{p-1}
\right\|^{p^\prime} _{L^{p^\prime} (\Omega)}
& =
\int_{|x|<\frac 3 n} (b |u_n|)^p \dx x
\\
&
=\int_{|x|<\frac 1 n} (b |u_n|)^p \, dx + \int_{\frac 1 n \leqslant |x|<\frac 2 n} (b |u_n|)^p \dx x
\\
&
= 
N\omega_N
\left[
\frac{ (1-2^\gamma)^p}{N-p} + \int_1^2    r^{N-p} \left(r^\gamma-2^\gamma\right)^p    \, \frac{\mathrm d r} r
\right]
\end{split}
\end{equation}
Hence, we see that the  norm of $(b|u_n|)^{p-1}$ in $L^{p^\prime}(\Omega)$
is independent of $n$ as well 
and strictly positive.
On the other hand, $(b  |u_n|)^{p-1}\rightarrow 0$ pointwise  in $\Omega$ 
and this readily implies that 
there is no subsequence of 
$\{ (b|u_n|)^{p-1} \}_{n\in \mathbb N}$ strongly converging in $L^{p^\prime}(\Omega)$.
\qed
\end{example}
\subsection{An elementary lemma}
\begin{lemma}\label{lemma elementare}
Assume $f_n\to f$ a.e. Moreover, let $g_n$, $n\in \naturale$, and $g$ in $L^q$, $1\leqslant q<+\infty$, verify $g_n\to g$ a.e., 
$|f_n|\leqslant g_n$ a.e., $\forall n\in\naturale$, and
\[\int_\Omega g_n^q\dx x\to \int_\Omega g^q\dx x\,.\]
Then $f_n,f\in L^q$ and
\[f_n\to f\text{ in }L^q\,.\]
\end{lemma}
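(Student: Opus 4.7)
The statement is a generalized dominated convergence theorem in the spirit of Fatou's lemma (sometimes ascribed to Vitali or presented as a variant of Scheffé). The plan is to first handle the membership in $L^q$ and then to obtain $L^q$-convergence via a single clever application of Fatou's lemma to a well-chosen nonnegative sequence.

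First, I would dispose of the pointwise bound on the limit. Since $|f_n|\leqslant g_n$ a.e.\ and both sides converge a.e.\ to $|f|$ and $g$ respectively, passing to the limit gives $|f|\leqslant g$ a.e., so $f\in L^q$ by hypothesis on $g$; the membership $f_n\in L^q$ follows from $|f_n|^q\leqslant g_n^q$. For the convergence, the key elementary estimate is the inequality $(a+b)^q\leqslant 2^{q-1}(a^q+b^q)$, valid for $a,b\geqslant 0$ and $q\geqslant 1$ by convexity of $t\mapsto t^q$. Combined with $|f_n-f|\leqslant |f_n|+|f|\leqslant g_n+g$, this yields
\[
|f_n-f|^q \leqslant 2^{q-1}\bigl(g_n^q+g^q\bigr)\qquad \text{a.e. in }\Omega.
\]

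The heart of the argument is to define the auxiliary nonnegative sequence
\[
h_n := 2^{q-1}\bigl(g_n^q+g^q\bigr)-|f_n-f|^q\geqslant 0.
\]
Since $f_n\to f$ and $g_n\to g$ a.e., we have $h_n\to 2^q g^q$ a.e. Applying Fatou's lemma to $\{h_n\}$ gives
\[
\int_\Omega 2^q g^q\dx x \leqslant \liminf_{n\to\infty}\int_\Omega h_n\dx x = 2^{q-1}\!\int_\Omega 2g^q\dx x - \limsup_{n\to\infty}\int_\Omega|f_n-f|^q\dx x,
\]
where the equality uses the crucial hypothesis $\int g_n^q\dx x\to\int g^q\dx x$ so that the $\liminf$ of the positive contribution is in fact a true limit, and the $\limsup$ emerges from the subtracted term. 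Cancelling $\int 2^q g^q\dx x$ on both sides forces $\limsup_n\int|f_n-f|^q\dx x\leqslant 0$, which is exactly the desired convergence $f_n\to f$ in $L^q$.

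There is no real obstacle here; the only subtlety is being careful that the hypothesis $\int g_n^q\to\int g^q$ is used precisely to convert a $\liminf$ of a difference into a sum of a limit and a $\limsup$, which is what makes the Fatou argument non-vacuous. If $q$ were allowed to be strictly less than $1$ the constant $2^{q-1}$ would need to be replaced, but since the statement restricts to $1\leqslant q<\infty$, the computation above is uniform.
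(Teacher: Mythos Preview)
Your proof is correct and follows exactly the same approach as the paper, which simply says ``It suffices to apply Fatou lemma to the sequence of nonnegative functions $F_n=2^{q-1}(g_n^q+g^q)-|f_n-f|^q$.'' You have merely filled in the details of this hint.
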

It suffices to apply Fatou lemma to the sequence of nonnegative functions
\[F_n=2^{q-1}(g_n^q+g^q)-|f_n-f|^q\,.\]
\section{A weak  compactness result}
The aim of this section is to establish a weak compactness criterion in the space $W^{1,p}_0(\Omega)$
that has an interest by itself. 
\begin{lemma}\label{lemma weak compact}
Let $\mathcal B$ be a nonempty subset of $W^{1,p}_0(\Omega)$. Assume that there exists a constant $C>0$ such that
\begin{equation}\label{stima vkn3 bis}
\|\nabla   u \|_{L^p(\Omega\setminus \Omega_\sigma)}^p\leqslant 
C\left(
1+\|  u \|_{L^p(\Omega\setminus \Omega_\sigma)}^p
\right)
\end{equation}
for any $\sigma>0$ and $u \in \mathcal B$, where $\Omega_\sigma:=\{x\in \Omega\colon\,|u(x)| \geqslant  \sigma\}$. Then, there exists a constant $M>0$ such that 
\begin{equation}\label{3.23quater}
\|  u \|_{W^{1,p}(\Omega)}\leqslant M\,
\end{equation}
for any 
$u \in \mathcal B$.
\end{lemma}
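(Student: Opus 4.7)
The plan is to apply the Sobolev embedding to the truncated function $T_\sigma u$ and combine the outcome with the hypothesis \eqref{stima vkn3 bis} to generate a self-improving estimate. For each $\sigma>0$ and $u\in\mathcal B$ the truncation $T_\sigma u\in W^{1,p}_0(\Omega)\cap L^\infty(\Omega)$ satisfies $\nabla T_\sigma u=\nabla u\,\chi_{\Omega\setminus\Omega_\sigma}$ almost everywhere, and the identity
\[
\|u\|_{L^p(\Omega\setminus\Omega_\sigma)}^p=\|T_\sigma u\|_{L^p(\Omega)}^p-\sigma^p|\Omega_\sigma|
\]
lets me rewrite the hypothesis as
\[
\|\nabla T_\sigma u\|_{L^p(\Omega)}^p+C\sigma^p|\Omega_\sigma|\leqslant C+C\,\|T_\sigma u\|_{L^p(\Omega)}^p .
\]

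Next, I would invoke Theorem~\ref{lorentz} to obtain $\|T_\sigma u\|_{L^{p^*}(\Omega)}\leqslant S_{N,p}\,\|\nabla T_\sigma u\|_{L^p(\Omega)}$ and then pass from $L^{p^*}$ back to $L^p$ via H\"older's inequality on the bounded set $\Omega$, producing the Poincar\'e--type estimate $\|T_\sigma u\|_{L^p(\Omega)}^p\leqslant |\Omega|^{p/N}S_{N,p}^p\,\|\nabla T_\sigma u\|_{L^p(\Omega)}^p$. Substituting back yields the closed inequality
\[
\bigl(1-C|\Omega|^{p/N}S_{N,p}^p\bigr)\,\|\nabla T_\sigma u\|_{L^p(\Omega)}^p+C\sigma^p|\Omega_\sigma|\leqslant C .
\]
In the regime $C|\Omega|^{p/N}S_{N,p}^p<1$---which is the setting in which Lemma~\ref{lemma weak compact} is intended to be applied---both non-negative quantities on the left are uniformly controlled independently of $\sigma$ and of $u\in\mathcal B$. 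Letting $\sigma\to\infty$ and invoking Fatou's lemma, together with Poincar\'e's inequality in $W^{1,p}_0(\Omega)$, then delivers the bound \eqref{3.23quater}.

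The delicate step is the closing of the self-improving estimate. If one interpreted the statement with no implicit smallness of $C$, the direct absorption above would fail, and one would instead have to exploit the companion inequality $C\sigma^p|\Omega_\sigma|\leqslant C$---that is, a decay estimate of type $|\Omega_\sigma|\lesssim \sigma^{-p}$---in tandem with an iteration on the dyadic super-level sets $\{2^k\leqslant |u|<2^{k+1}\}$ of $u$, in order to break the circular dependence between $\|u\|_{L^p(\Omega)}$ and $\|\nabla u\|_{L^p(\Omega)}$. The crucial feature of the hypothesis is that it is postulated for \emph{every} $\sigma>0$ and not merely in the asymptotic limit $\sigma\to\infty$; this additional degree of freedom, encoded in the absorbed term $C\sigma^p|\Omega_\sigma|$, is precisely what makes the passage from the family of truncated estimates to a global $W^{1,p}$ bound possible.
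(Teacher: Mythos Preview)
Your absorption argument hinges on the condition $C|\Omega|^{p/N}S_{N,p}^p<1$, and you assert that this is ``the setting in which Lemma~\ref{lemma weak compact} is intended to be applied.'' This is not so: in Proposition~\ref{b-limitata} the inequality \eqref{final proposition ter} yields a constant $C$ of order $\|b\|_\infty^p/\alpha$, with no smallness assumption whatsoever on $\|b\|_\infty$; similarly in \eqref{stima vkn3} the constant involves $(T_mb)^p$ with $m$ merely large enough to make $\|b-T_mb\|_{N,\infty}$ small, which forces $m$ (hence $C$) to be large in general. So the lemma must hold for arbitrary $C>0$, and your main line of argument does not establish it.

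Your fallback---extracting a decay estimate $|\Omega_\sigma|\lesssim\sigma^{-p}$ and iterating over dyadic shells---is not actually derived: once the coefficient $1-C|\Omega|^{p/N}S_{N,p}^p$ is negative, your displayed inequality no longer controls $\sigma^p|\Omega_\sigma|$ at all, and the sketch gives no indication of how such an iteration would close. The paper circumvents the absorption issue entirely by arguing by contradiction: if $\|u_k\|\to\infty$, set $v_k=u_k/\|u_k\|$ and choose $\sigma=\varepsilon\|u_k\|$; after dividing \eqref{stima vkn3 bis} by $\|u_k\|^p$ the additive constant becomes $\|u_k\|^{-p}\to0$, so in the weak limit one obtains
\[
\int_\Omega|\nabla T_\varepsilon v|^p\,\mathrm dx\leqslant C\int_{\{|v|<\varepsilon\}}|v|^p\,\mathrm dx,
\]
which via Poincar\'e forces $|\{|v|>0\}|=0$ as $\varepsilon\downarrow0$, and then the same estimate upgrades $v_k\rightharpoonup0$ to $v_k\to0$ strongly, contradicting $\|v_k\|=1$. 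The normalization is precisely what kills the constant term without any smallness hypothesis.
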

\begin{proof}
We argue by contradiction and assume $\mathcal B$ unbounded. Then we construct a sequence 
$\{u_k\}_k$ in $\mathcal B$  such that
\[\| u_k \|:=\| \nabla u_k \|_{p}\rightarrow \infty\]
as $k\rightarrow \infty$.
By \eqref{stima vkn3 bis} we get, for any $k \in \mathbb N$ and $\varepsilon >0$
\begin{equation}\label{3.3ter}
\int_\Omega |\nabla T_{\ee\|u_k\|}u_k|^p
\dx x
\leqslant 
C\left(
1+
\int_{\Omega}  |u_k|^p\chi_{\{|u_k|<\ee\|u_k\|\}} \dx x
\right)
\end{equation}
We set 
\[v_k=\frac{u_k}{\|u_k\|}\,.\]
Hence, there exists $v \in W^{1,p}_0(\Omega)$ such that (up to a subsequence)
$v_k\rightharpoonup v$ weakly in $\Sob p{}$, $v_k\to v$ strongly in $L^p$ and $v_k(x)\to v(x)$ for a.e.~$x\in \Omega$.
Notice that
\[\frac {T_{\varepsilon \|u_k\|  } u_k } {\|u_k\|}= T_{\ee} v_k\,,\]
thus $\nabla {T_{\varepsilon \|u_k\|  } u_k }=0$ on the set $\{x\in\Omega:|v_k(x)|\geqslant \ee\}$. 
Dividing 
\eqref{3.3ter}
by $\|u_k\|^{p}$  we have
\begin{equation}\label{stima vk ter}
  \int_\Omega |\nabla T_{\ee}v_k|^p \dx x \leqslant
C\left(
\|u_k\|^{ -p} + 
\int_{\Omega}  |v_k|^p\chi_{\{|v_k|<\ee    \}} \dx x
\right)
\end{equation}
Now, we let $k\to+\infty$. To this end, we note that $T_\ee v_k\rightharpoonup T_\ee v$ weakly in $\Sob p{\Omega}$ and $T_\ee v_k\to T_\ee v$ strongly in $L^p(\Omega)$. In the left hand side of \eqref{stima vk ter}, we use semicontinuity of the norm with respect to weak convergence, while in the right hand side we observe that $\|u_k\|^{-1}\to0$. Moreover, if
\begin{equation}\label{e eccezionale ter}
|\{x\in\Omega:|v(x)|=\ee\}|=0\,,
\end{equation}
then we have $\chi_{\{|v_k|<\ee\}}\to \chi_{\{|v|<\ee\}}$ a.e.\ in $\Omega$ and hence
\[v_k\,\chi_{\{|v_k|<\ee\}}\to v\,\chi_{\{|v|<\ee\}}\]
strongly in $L^p$. Note that the set of values $\ee>0$ for which \eqref{e eccezionale ter} fails is at most countable.
Thus, we end up with the following estimate
\begin{equation}\label{stima v ter}
  \int_\Omega |\nabla T_{\ee}v|^p \dx x \leqslant C \int_{\Omega}|v|^p\chi_{\{|v|<\ee\}} \dx x
\end{equation}
Using Poincar\'e inequality in the left hand side, this yields
\[\ee^p\,|\{x:|v|\geqslant\ee\}|\leqslant C\,\ee^p\,|\{x:0<|v|<\ee\}|\,.\]
Passing to the limit as $\ee\downarrow 0$ (assuming \eqref{e eccezionale ter}), we deduce
\[|\{x:|v|>0\}|=0\,,\]
that is, $v(x)=0$ a.e. Once we know that $v_k\rightharpoonup 0$ weakly in $\SobpO$, the above argument (formally with $\ee=+\infty$, i.e.~without truncating $v_k$) actually shows that $v_k\to0$ strongly in $\SobpO$, compare with \eqref{stima v ter}, and this is not possible, as $\|v_k\|=1$, for all $k$. 
\end{proof} 

\section{Proof of Theorem \ref{main}}\label{Section3}

\subsection{The case of bounded coefficient}\label{coefficienti limitati}
In this subsection we assume  $b\in L^\infty(\Omega)$.  For a given function  $v\in L^p(\Omega)$, we define the vector field on $\Omega\times\Rn$
\begin{equation}\label{campoAv}
A_v(x,\xi):=A(x,v(x),\xi)
\end{equation}
which satisfies similar conditions as $A$, namely
\begin{equation}\label{coercivita-v}
\langle A_v(x,\xi),\xi\rangle\geqslant \alpha|\xi|^p-\big(b(x)\,|v|\big)^p-\fff(x)^{p}
\end{equation}
\begin{equation}\label{limitatezza-v}
|A_v(x,\xi)|\leqslant \beta|\xi|^{p-1}+\big(b(x)\,|v|\big)^{p-1}+\fff(x)^{p-1}
\end{equation}
\begin{equation}\label{monotonia-v}
\langle A_v(x,\xi)- A_v(x,\eta ), \xi -\eta \rangle>0\qquad \text{for }\xi\ne\eta
\end{equation}
Hence, we can consider a quasilinear elliptic 
operator similar to \eqref{loperatore}
\begin{equation}\label{loperatore-v}
u\in \SobpO\mapsto-\divergenza A_v(x,\nabla u)\in \SobdualO
\end{equation}
defined by the rule
\begin{equation}
\langle -\divergenza A_v(x,\nabla u),w\rangle =\int_\Omega \langle A(x,v,\nabla u),\nabla w\rangle \dx x
\end{equation}
for any $w\in \SobpO$. The operator at \eqref{loperatore-v} is invertible. Indeed,
\begin{proposition}\label{la proposizione}
For every $\Phi\in\SobdualO$, there exists a unique $u\in \SobpO$ such that
\begin{equation}\label{equazione-v}
-\divergenza A_v(x,\nabla u)=\Phi
\end{equation}
Moreover, the mapping
\begin{equation}\label{mappa continua}
(v,\Phi)\in L^p(\Omega)\times \SobdualO\mapsto u\in \SobpO
\end{equation}
is continuous.
\end{proposition}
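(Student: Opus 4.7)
The strategy is to deduce existence and uniqueness from classical monotone-operator theory and then to derive the continuity of the solution map by a Leray--Lions compactness argument, taking advantage of the standing hypothesis $b\in L^\infty(\Omega)$.

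\textbf{Existence and uniqueness.} Fix $v\in L^p(\Omega)$ and set $T_v u := -\divergenza A_v(x,\nabla u)$. By \eqref{limitatezza-v}, together with $b\in L^\infty(\Omega)$, $v\in L^p(\Omega)$ and $\fff\in L^p(\Omega)$, the map $T_v$ sends $\SobpO$ into $\SobdualO$ and is bounded on bounded sets; hemicontinuity is standard from the Carath\'eodory property of $A$ and dominated convergence. The strict monotonicity \eqref{monotonia-v} passes to $T_v$ and yields uniqueness. Coercivity follows from \eqref{coercivita-v}: testing against $u$,
\[\langle T_v u, u\rangle \geqslant \alpha\,\|\nabla u\|_{L^p(\Omega)}^p - \|b\,v\|_{L^p(\Omega)}^p - \|\fff\|_{L^p(\Omega)}^p,\]
and the last two terms are finite constants (depending on $v$ but not on $u$). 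The Browder--Minty theorem then gives surjectivity of $T_v$, which solves \eqref{equazione-v}.

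\textbf{Continuity.} Let $(v_n,\Phi_n)\to(v,\Phi)$ in $L^p(\Omega)\times\SobdualO$ and denote by $u_n$ the associated solutions. Testing the equation against $u_n$ and using \eqref{coercivita-v} together with the boundedness of $\{\|v_n\|_{L^p(\Omega)}\}$, we obtain a uniform bound $\|u_n\|_{\SobpO}\leqslant C$. Passing to a subsequence, $u_n\rightharpoonup u^\star$ weakly in $\SobpO$, strongly in $L^p(\Omega)$ and a.e.\ in $\Omega$, and $v_n\to v$ a.e. The crucial step is to prove $\nabla u_n\to\nabla u^\star$ a.e. Testing the equation for $u_n$ with $u_n-u^\star$ and exploiting $\Phi_n\to\Phi$ strongly with $u_n-u^\star\rightharpoonup 0$, we get
\[\int_\Omega A(x,v_n,\nabla u_n)\cdot\nabla(u_n-u^\star)\dx x\to 0.\]
Since $b\in L^\infty(\Omega)$, we also have $b\,v_n\to b\,v$ strongly in $L^p(\Omega)$, and \eqref{limitatezza-v} together with dominated convergence yields $A(x,v_n,\nabla u^\star)\to A(x,v,\nabla u^\star)$ strongly in $L^{p'}(\Omega,\mathbb R^N)$; hence also $\int A(x,v_n,\nabla u^\star)\cdot\nabla(u_n-u^\star)\dx x\to 0$. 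Subtracting,
\[\int_\Omega \bigl[A(x,v_n,\nabla u_n)-A(x,v_n,\nabla u^\star)\bigr]\cdot\nabla(u_n-u^\star)\dx x\to 0.\]
Strict monotonicity \eqref{monotonia-v} combined with the standard Leray--Lions lemma then implies $\nabla u_n\to\nabla u^\star$ a.e. Passing to the limit in \eqref{equazione-v} shows that $u^\star$ solves the equation associated with $(v,\Phi)$; by uniqueness $u^\star=u$ and the whole sequence converges.

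\textbf{Strong convergence and the main obstacle.} The a.e.\ convergence of gradients combined with the convergence of the energy $\int A(x,v_n,\nabla u_n)\cdot\nabla u_n\dx x\to \int A(x,v,\nabla u)\cdot\nabla u\dx x$ (obtained by testing each equation against $u_n$, resp.~$u$, and using $\langle\Phi_n,u_n\rangle\to\langle\Phi,u\rangle$) gives, via \eqref{coercivita-v}, convergence of $\int|\nabla u_n|^p\dx x\to\int|\nabla u|^p\dx x$ after absorbing the lower order remainders, which converge strongly. Lemma~\ref{lemma elementare} applied to $f_n=|\nabla u_n-\nabla u|$ with a suitable majorant then upgrades the a.e.\ convergence to strong convergence in $\SobpO$, yielding $u_n\to u$. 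The delicate point throughout is that the vector field $A_{v_n}$ itself varies with $n$: the Leray--Lions a.e.\ convergence of gradients and the final passage to norm convergence both rely on the strong $L^p$--convergence $b\,v_n\to b\,v$, which is precisely what the assumption $b\in L^\infty(\Omega)$ provides and which will no longer be available in the general case treated in the remainder of Section~\ref{Section3}.
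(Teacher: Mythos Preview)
Your proof is correct and follows essentially the same route as the paper: existence/uniqueness by monotone-operator theory, then continuity via testing with $u_n-u$, using the Leray--Lions lemma (\cite[Lemma~3.3]{LL}) to obtain a.e.\ convergence of gradients, and finally upgrading to strong $\SobpO$--convergence through the coercivity bound and Lemma~\ref{lemma elementare}. The only cosmetic differences are that the paper obtains the energy convergence~\eqref{202005033} by combining~\eqref{202005022} with the weak $L^{p'}$--convergence of $A(x,v_n,\nabla u_n)$ rather than by testing separately with $u_n$ and $u$, and that Lemma~\ref{lemma elementare} is applied with $f_n=\nabla u_n$ and $g_n=\alpha^{-1/p}\bigl[A(x,v_n,\nabla u_n)\!\cdot\!\nabla u_n+(b|v_n|)^p+\fff^p\bigr]^{1/p}$ rather than with $f_n=|\nabla u_n-\nabla u|$.
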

\begin{proof}
Existence of a solution is classical, see e.g.~\cite{LL}, \cite[pg.~27]{Br}, or \cite[Th\'eor\`eme~2.8, pg.~183]{L}. Uniqueness trivially holds by monotonicity.\par
For the sake of completeness, we prove continuity of the map \eqref{mappa continua}.
Given $v_n\to v$ in $L^p(\Omega)$ and $\Phi_n\to\Phi$ in $\SobdualO$,
let $u_n\in \SobpO$ solve
\begin{equation}\label{202005251}
-\divergenza A(x,v_n,\nabla u_n)=\Phi_n\,.
\end{equation}
The sequence $\{u_n\}_n$ is clearly bounded, hence we may assume $u_n \rightharpoonup u$ weakly in $\SobpO$. Moreover, testing equation \eqref{202005251} with $u_n-u$, we have
\begin{equation}\label{202005022}
\lim_{n\rightarrow \infty}
\int_\Omega A(x,v_n,\nabla u_n) (\nabla u_n-\nabla u)\dx x=\lim_{n\rightarrow \infty}\langle \Phi_n,u_n-u\rangle=  0\,.
\end{equation} 
On the other hand, we easily see that $A(x,v_n,\nabla u)\to A(x,v,\nabla u)$ strongly in $L^{p'}(\Omega,\reale^N)$ and thus \eqref{202005022} implies
\begin{equation}\label{202005023}
\lim_{n\rightarrow \infty}
\int_\Omega \left[ A(x,v_n,\nabla u_n) -  A(x,v_n,\nabla u) \right]\nabla (u_n-u)\dx x= 0\,.
\end{equation}
The integrands in \eqref{202005023} are nonnegative by monotonicity. Hence, arguing as in the proof of \cite[Lemma~3.3]{LL}, we also get $\nabla u_n(x)\to\nabla u(x)$ a.e.\ in $\Omega$, and
\[A (x,v_n,\nabla u_n)\rightharpoonup A  (x,v ,\nabla u )\]
weakly in $L^{p'} (\Omega,\mathbb R^N)$. Combining this with \eqref{202005022} yields
\begin{equation}\label{202005033}
\lim_{n\rightarrow \infty}
\int_\Omega A(x,v_n,\nabla u_n) \nabla u_n\dx x=\int_\Omega A(x,v,\nabla u) \nabla u\dx x\,.
\end{equation}
By coercivity condition \eqref{coercivita}, we deduce
\[\alpha |\nabla u_n|^p\leqslant A(x,v_n,\nabla u_n) \nabla u_n+(b|v_n|)^p+\fff^{p}\]
Trivially $\int_\Omega(b|v_n|)^p\dx x$ converges to $\int_\Omega(b|v|)^p\dx x$. In view of \eqref{202005033}, by Lemma~\ref{lemma elementare} we get $u_n \to u$ strongly in $\SobpO$, and $u$ solves the equation
\[-\divergenza A(x,v,\nabla u)=\Phi\,.\]
\end{proof}
In view of Rellich Theorem, we have
\begin{corollary}\label{corollary compattezza}
For fixed $\Phi\in\SobdualO$, the mapping
\begin{equation}\label{risolvente}
\FFF\colon v\in \SobpO\mapsto u\in \SobpO
\end{equation}
which takes $v$ to the unique solution $u$ of equation~\eqref{equazione-v} is compact.
\end{corollary}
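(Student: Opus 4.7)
The plan is to view $\FFF$ as the composition of two maps: the compact Rellich embedding $W^{1,p}_0(\Omega) \hookrightarrow L^p(\Omega)$, followed by the continuous solution map $L^p(\Omega) \to W^{1,p}_0(\Omega)$ provided by Proposition~\ref{la proposizione} (with $\Phi$ held fixed). Once $\FFF$ is expressed in this way, compactness is essentially automatic, since a composition of a compact map with a continuous map is compact.

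More concretely, to verify continuity of $\FFF$ on $\SobpO$, I would take a sequence $v_n \to v$ in $\SobpO$. The continuous embedding $\SobpO \hookrightarrow L^p(\Omega)$ gives $v_n \to v$ in $L^p(\Omega)$, and then Proposition~\ref{la proposizione} applied with the constant datum $\Phi_n = \Phi$ yields $\FFF(v_n) \to \FFF(v)$ strongly in $\SobpO$. To verify the precompactness property, I would take an arbitrary bounded set $\mathcal B \subset \SobpO$, and a sequence $\{v_n\} \subset \mathcal B$. By Rellich's theorem, a subsequence (not relabeled) converges strongly in $L^p(\Omega)$ to some limit $v$. Applying Proposition~\ref{la proposizione} once more (with $\Phi_n \equiv \Phi$), the corresponding solutions $u_n = \FFF(v_n)$ converge strongly in $\SobpO$ to $\FFF(v)$. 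Thus $\FFF(\mathcal B)$ is precompact in $\SobpO$.

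There is really no obstacle here: all the substantive analytic work, namely the a.e.\ convergence of gradients, weak convergence of $A(x,v_n,\nabla u_n)$, and the strong convergence of $u_n$ in $\SobpO$ via Lemma~\ref{lemma elementare}, has already been carried out in the proof of Proposition~\ref{la proposizione}. The only ingredient to invoke beyond the Proposition is the compactness of the embedding $\SobpO \hookrightarrow L^p(\Omega)$, i.e.\ the Rellich--Kondrachov theorem, and the trivial fact that the continuous embedding $\SobpO \hookrightarrow L^p(\Omega)$ transfers strong convergence. The statement then follows in a few lines.
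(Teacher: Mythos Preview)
Your proposal is correct and matches the paper's own argument: the paper simply states ``In view of Rellich Theorem, we have'' before the corollary, which is exactly your decomposition of $\FFF$ as the compact embedding $\SobpO\hookrightarrow L^p(\Omega)$ followed by the continuous map $(v,\Phi)\mapsto u$ of Proposition~\ref{la proposizione} with $\Phi$ fixed. Your write-up just spells out in detail what the paper leaves implicit.
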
 
Now we  state an existence result  to Problem \eqref{Dirichlet} 
when $b\in L^\infty(\Omega)$.  

\begin{proposition}\label{b-limitata}
Let 
\eqref{coercivita}, \eqref{limitatezza} and \eqref{monotonia} 
be in charge 
with $b\in L^\infty(\Omega)$. 
 Then Problem \eqref{Dirichlet}  has a solution $u \in W_0^{1,p} (\Omega)$. 

\end{proposition}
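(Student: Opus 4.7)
The plan is to apply the Leray--Schauder fixed point theorem (Theorem~\ref{LerSch2}) to the compact map $\FFF\colon\SobpO\to\SobpO$ furnished by Corollary~\ref{corollary compattezza}. Since a fixed point $u=\FFF(u)$ is precisely a solution of Problem~\eqref{Dirichlet}, it suffices to exhibit a constant $M>0$ with $\|v\|_{\SobpO}<M$ for every $v\in\SobpO$ and every $t\in[0,1]$ satisfying $v=t\FFF(v)$.

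The case $t=0$ gives $v=0$. For $t\in(0,1]$, set $u=\FFF(v)$, so that $v=tu$ and $-\divergenza A(x,v,\nabla u)=\Phi$. The first step is to test this equation against the truncation $T_\sigma u\in\SobpO$, for an arbitrary $\sigma>0$. Using $\nabla T_\sigma u=\nabla u\,\chi_{\{|u|<\sigma\}}$, coercivity~\eqref{coercivita}, the pointwise bound $|v|=t|u|\leqslant|u|$ (since $t\leqslant 1$), the hypothesis $b\in L^\infty(\Om)$, Poincar\'e's inequality applied to $T_\sigma u$, and Young's inequality to absorb $\langle\Phi,T_\sigma u\rangle$, I expect to reach
\begin{equation*}
\int_{\{|u|<\sigma\}}|\nabla u|^p\dx x\leqslant K\left(1+\int_{\{|u|<\sigma\}}|u|^p\dx x\right),
\end{equation*}
with a constant $K$ depending on $\alpha$, $p$, $\|b\|_\infty$, $\|\fff\|_p$, $\|\Phi\|_{\SobdualO}$ and $|\Om|$ but independent of $v$, $t$, and $\sigma$.

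The second step is to translate this into the format demanded by Lemma~\ref{lemma weak compact}. Multiplying through by $t^p$ and using $\nabla v=t\nabla u$, $|v|^p=t^p|u|^p$ together with the identity $\{|u|<\sigma\}=\{|v|<t\sigma\}$, and then setting $\sigma':=t\sigma$, the inequality $t\leqslant 1$ produces
\begin{equation*}
\int_{\Om\setminus\Om_{\sigma'}}|\nabla v|^p\dx x\leqslant K\left(1+\int_{\Om\setminus\Om_{\sigma'}}|v|^p\dx x\right)
\end{equation*}
for every $\sigma'>0$, with $\Om_{\sigma'}=\{|v|\geqslant\sigma'\}$. Lemma~\ref{lemma weak compact} then delivers the uniform bound $\|v\|_{\SobpO}\leqslant M$, and Theorem~\ref{LerSch2} yields a fixed point of $\FFF$, which is a solution of~\eqref{Dirichlet}.

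The main delicate point is ensuring that $K$ is genuinely independent of the scaling parameter $t$: the condition $t\leqslant 1$ is what allows $|v|$ to be replaced by $|u|$ in the singular lower order term $(b|v|)^p$, which $b\in L^\infty(\Om)$ then controls by $\|b\|_\infty^p|u|^p\chi_{\{|u|<\sigma\}}$; and the rescaling $\sigma\mapsto t\sigma$ is exactly what turns the truncation-based bound on $u$ into one on $v$ of the form required by the weak compactness lemma.
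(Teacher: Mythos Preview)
Your proposal is correct and follows essentially the same route as the paper: Leray--Schauder applied to the compact map $\FFF$ of Corollary~\ref{corollary compattezza}, testing the equation $u=t\FFF[u]$ with the truncation $T_\sigma$, and invoking Lemma~\ref{lemma weak compact} for the uniform a~priori bound. The only cosmetic difference is in the bookkeeping of the parameter $t$: the paper writes the equation as $-\divergenza A(x,u,\tfrac1t\nabla u)=\Phi$ and uses $t^{1-p}\geqslant 1$ to discard $t$ directly in the coercivity estimate, whereas you first estimate $u=\FFF(v)$ and then rescale via $\sigma'=t\sigma$ to transfer the bound to $v=tu$; both manipulations accomplish exactly the same thing.
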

\begin{proof}
If $\mathcal{F}$ is the operator defined in Corollary \ref{corollary compattezza}, clearly
a fixed point of $\FFF$ is a solution to Problem~\eqref{Dirichlet}. To apply Leray-Schauder theorem, we need an a~priori estimate on the solution $u\in \SobpO$ of the equation
\[u=t\FFF[u]\]
that is
\begin{equation}\label{equazione-k ter}
-\divergenza A\left(x,u ,\frac1{t }\,\nabla u \right)=\Phi\,,
\end{equation}
as $t\in{}]0,1]$ varies.
By using $T_\sigma u$ with $\sigma >0$ as a test function in  \eqref{equazione-k ter} we get
\begin{equation}
\int _\Omega 
\left \langle
A\left(x,u ,\frac1{t }\,\nabla u \right)
,
\nabla T_\sigma u
\right \rangle\dx x
=
\left \langle
\Phi
,
T_\sigma u
\right \rangle
\end{equation}
Therefore, using the point-wise condition \eqref{coercivita-v} we get
\begin{equation}\label{stima vk0 ter}
\alpha\, t ^{1-p}\int_\Omega |\nabla T_{\sigma} u |^p \dx x
\leqslant \|\Phi\|\,\|\nabla T_\sigma u_k\|_p+\int_{\Omega}\Big[b(x)^p|u |^p\chi_{\{|u |< \sigma \} }+\fff(x)^{p}\Big]\dx x
\end{equation}
As $0<t\leqslant 1$, by Young inequality \eqref{stima vk0 ter} yields
\begin{equation}\label{final proposition ter}
\frac \alpha 2 
\int_\Omega |\nabla T_{\sigma} u |^p \dx x
\leqslant
 \|\Phi\|^{p^\prime}
+
\| b \|_\infty ^p 
\int_{\Omega}
|u|^p \chi_{ \left\{   |u|\leqslant \sigma     \right\}  }
\dx x
+
\|\varphi\|_p^{p}
\end{equation}
The conclusion follows by Lemma \ref{lemma weak compact}. 
\end{proof}
\subsection{The approximating problems}\label{approssimazione}
For each $n\in \naturale$, we set
\begin{equation}\label{theta}
\vartheta_n(x)=\frac{T_nb(x)}{b(x)},\qquad \mbox{ a.e. }x\in \Om\,,
\end{equation}
and define the vector field 
\begin{equation}\label{A n}
A_n\colon (x,u,\xi)\in \Omega\times \reale\times \Rn\mapsto \Rn
\end{equation}
letting
\begin{equation}\label{theta2}
A_n(x,u,\xi)=A(x,\vartheta_n u,\xi)\,
\end{equation}
The vector field $A_n$ has similar properties as $A$, with $b$ replaced by $T_nb$. More precisely, 
\begin{equation}\label{coercivita n}
\langle A_n(x,u,\xi),\xi\rangle\geqslant \alpha|\xi|^p-\big(T_nb(x)\,|u|\big)^p-\fff(x)^{p}
\end{equation}
\begin{equation}\label{limitatezza n}
|A_n(x,u,\xi)|\leqslant \beta|\xi|^{p-1}+\big(T_nb(x)\,|u|\big)^{p-1}+\fff(x)^{p-1}
\end{equation}
\begin{equation}\label{monotonia n}
\langle A_n(x,u,\xi)- A_n(x,u,\eta ), \xi -\eta \rangle>0\qquad \text{for }\xi\ne\eta
\end{equation}
Applying Proposition~\ref{b-limitata} with $A_n$ in place of $A$, fixed $\Phi\in \Sobdual\Omega$, we find $u_n\in \SobpO$ such that
\begin{equation}\label{equazione n}
-\divergenza A_n(x,u_n,\nabla u_n)= \Phi\,.
\end{equation}
Notice that we have, for $\sigma>0$
\begin{equation}\label{stima vkn}
\alpha\, \int_\Omega |\nabla T_\sigma u_n|^p
\dx x
\leqslant \|\Phi\|\,\|\nabla T_\sigma u_n\|_p+\int_{\Omega}\Big[ (T_n b) ^p\,|u_n|^p
\chi_{\{|u_n|<\sigma\}}
+\fff^{p}\Big]\dx x
\end{equation}
which implies
\begin{equation}\label{stima vkn norme}
\alpha^{\frac1p}\|\nabla T_\sigma u_n\|_p
\leqslant (\|\Phi\|\,\|\nabla T_\sigma u_n\|_p)^{\frac1p}+\|(T_n b)\,u_n
\chi_{\{|u_n|<\sigma\}}\|_p
+\|\fff\|_p
\end{equation}
Our next step consists in showing that the sequence $\{u_n\}_n$ 
is bounded in $\SobpO$. Let  $m$ be a positive integer  to be chosen later. For  $n\geqslant m$ we have
\[T_n b\leqslant T_mb+(b-T_mb)\] 
and hence 
\begin{equation}\label{migliore costante}
\|(T_n b)\,u_n \chi_{\{|u_n|<\sigma\}}\|_p\leqslant \|(T_m b)\,u_n \chi_{\{|u_n|<\sigma\}}\|_p+\|(b-T_m b)\,u_n \chi_{\{|u_n|<\sigma\}}\|_p
\end{equation}
Using H\"older and Sobolev inequalities we get
\[\|(b-T_mb)\,u_n
\chi_{\{|u_n|<\sigma\}}
\|_p\leqslant \|b-T_mb\|_{N,\infty}\|T_\sigma u_n\|_{p^*,p}\le
\,S_{N,p}\,
 \|b-T_mb\|_{N,\infty}\|\nabla T_\sigma u_n\|_p\]
Then \eqref{stima vkn norme} and \eqref{migliore costante} give
\begin{equation}\label{stima vkn3bis}
\begin{split}
\alpha^{\frac1p}\|\nabla T_\sigma u_n\|_p\leqslant 
(\|\Phi\|\,\|\nabla T_\sigma u_n\|_p)^{\frac1p}&+\|(T_m b)\,u_n \chi_{\{|u_n|<\sigma\}}\|_p
+\|\fff\|_p
\\
&
+S_{N,p}  \|b-T_m b\|_{L^{N,\infty}  (\Omega)  }  \|
\nabla T_\sigma u_n\|_{L^{p}  (\Omega) }
\end{split}
\end{equation}
By our assumption \eqref{lndeboleinfty}, the level $m$ can be chosen large enough so that 
\[
S_{N,p}  \|b-T_m b\|_{L^{N,\infty}  (\Omega)  }<\alpha^{\frac1p}  
\]
Then, by absorbing 
in 
\eqref{stima vkn3bis}  the latter term of the right hand side in the left hand side, we get
\begin{equation}\label{stima vkn3}
C\int_\Omega |\nabla T_\sigma u_n|^p\dx x\leqslant \|\Phi\|\,\|\nabla T_\sigma u_n\|_p+\int_{\Omega}\Big[(T_m b)^p\,|T_\sigma u_n|^p
\chi_{\{|u_n|<\sigma\}}
+\fff^{p}\Big]\dx x
\end{equation}
for a positive constant $C$ which is independent of $n$. Now, it is clear that \eqref{stima vkn3},  via Young inequality, allows us to apply Lemma~\ref{lemma weak compact}, then 
\begin{equation}\label{3.23bis}
\|u_n\|\leqslant M\,
\end{equation}
for a constant $M$ independent of $n$. 

\medskip

In the model case \eqref{modello}, it is easy to show that the operator $\FFF$ defined in \eqref{risolvente} is compact, also for $b\in L^N(\Omega)$ (see Remark \ref{202005252}  below). In the general case, in which
$b\in L^{N,\infty}(\Omega)$ we need more work. 

\subsection{Passing to the limit}\label{pass lim}
Now, we are in a position to conclude the proof of Theorem~\ref{main}.
Taking into account estimate \eqref{3.23bis}
we may assume
\begin{equation}\label{22}
\begin{split}
u_n \rightharpoonup u  & \qquad \text{in $\SobpO$ weakly} \\
u_n \rightarrow  u & \qquad \text{in $L^q(\Omega)$ strongly for any $q<p^*$, and also a.e.\ in $\Omega$}
\end{split}
\end{equation}
for some $u \in W^{1,p}_0(\Omega)$.
We shall conclude our proof showing that $u$ solves Problem~\eqref{Dirichlet}.
In the rest of our argument, we let for simplicity   $\gamma(t):=\arctan t$. Obviously, $\gamma \in C^1(\mathbb R)$, $|\gamma (t)|\leqslant |t|$ and $0 \leqslant \gamma^\prime(t) \leqslant 1$ for all $t \in \mathbb R$. In particular,
$\gamma$ is Lipschitz continuous in the whole of $\mathbb R$ and therefore 
\[
u_n,u\in \SobpO \quad \Longrightarrow \quad \gamma(u_n-u)\in \SobpO\,.
\]
Moreover, since $\gamma(0)=0$ we have 
\begin{equation}\label{23}
\gamma(u_n-u) \rightharpoonup 0    \qquad \text{in $\SobpO$ weakly}\,.  
\end{equation}
Testing equation \eqref{equazione n} with the function $\gamma(u_n-u)$ we get
\[
\int_\Omega A_n(x,u_n,\nabla u_n) \nabla \gamma(u_n-u)\dx x= \left\langle \Phi , \gamma(u_n-u) \right \rangle
\]
where $\nabla \gamma(u_n-u) = \gamma^\prime(u_n-u) (\nabla u_n - \nabla u)$.
In view of \eqref{23} we necessarily have
\begin{equation}\label{24}
\lim_{n\rightarrow \infty}
\int_\Omega A_n(x,u_n,\nabla u_n) \nabla \gamma(u_n-u)\dx x=  0\,.
\end{equation} 

We claim that
\begin{equation}\label{26}
\lim_{n\rightarrow \infty} \int_\Omega A_n(x,u_n,\nabla u) \nabla \gamma(u_n-u)\dx x= 0\,.
\end{equation} 
In order to prove \eqref{26}, since $\nabla u_n - \nabla u\rightharpoonup 0$, it suffices to show that
\begin{equation}\label{compattezza}
\gamma'(u_n-u)\,A_n(x,u_n,\nabla u)= \frac { A_n(x,u_n,\nabla u)  }{1+|u_n-u|^2}\qquad \text{is compact in }L^{p'}\,.
\end{equation}
Preliminarily, we observe that  combining  \eqref{22} with the property that $\vartheta_n\rightarrow 1$ as $n\rightarrow \infty$, we have
\[
\frac { A_n(x,u_n,\nabla u)  }{1+|u_n-u|^2} \rightarrow  
A (x,u ,\nabla u) \qquad \text{a.e.\ in }\Omega\,.
\]
We are going to use Lemma~\ref{lemma elementare}. To this end, by \eqref{limitatezza n} we deduce that
\[
\left|\frac { A_n(x,u_n,\nabla u)  }{1+|u_n-u|^2}\right|^{p'}\leqslant C
\left[|\nabla u|^p+\varphi^{p} + (b|u|)^{p}+\frac {(b |u_n-u|)^{p} }{1+|u_n-u|^2}\right]
\]
for a positive constant $C=C(p,\beta)$. Hence, we can pass to the limit if $1<p\leqslant 2$. For $p>2$ we choose $s$ satisfying
\[
\frac {p^\ast}{p} < s < \frac {p^\ast}{p-2}\,,
\]
so that $ps^\prime<N$, and we conclude also in this case, further estimating with the aid of Young inequality
\[
\frac {(b |u_n-u|)^{p} }{1+|u_n-u|^2}\leqslant b^{ps'}+   |u_n-u|^{(p-2)s}\,.
\]
\medskip
Now, from \eqref{24} and \eqref{26} we get
\begin{equation}\label{27}
\lim_{n\rightarrow \infty}
\int_\Omega \left[ A_n(x,u_n,\nabla u_n) -  A_n(x,u_n,\nabla u) \right]\nabla \gamma(u_n-u)\dx x= 0\,.
\end{equation}
As the integrand is nonnegative, we have (up to a subsequence)
\[\left[ A_n(x,u_n,\nabla u_n) -  A_n(x,u_n,\nabla u) \right]\nabla \gamma(u_n-u)\to 0\]
a.e.\ in $\Omega$. Moreover, since $\gamma'(u_n-u)\to1$ a.e.\ in $\Omega$, the above in turn implies
\begin{equation}\label{202005021}
\left[ A_n(x,u_n,\nabla u_n) -  A_n(x,u_n,\nabla u) \right]\,(\nabla u_n-\nabla u)\to 0
\end{equation}
Arguing as in the proof of \cite[Lemma~3.3]{LL}, we see that
\begin{equation}\label{29.4}
\nabla u_n  \rightarrow  \nabla u \qquad \text{a.e.\ in $\Omega$}
\end{equation}
and
\begin{equation}\label{29.6}
A_n (x,u_n,\nabla u_n)
  \rightharpoonup
A  (x,u ,\nabla u )
 \qquad \text{in 
$L^{p^\prime} (\Omega,\mathbb R^N)$ weakly}
\end{equation}
and we conclude that $u$ is a solution to the original problem~\eqref{Dirichlet}.

\begin{remark}\label{202005252}
We discuss briefly the particular case in which the operator has the form
\[A(x,v,\xi)=A'(x,\xi)+A''(x,v)\,,\]
with
\[|A''(x,v)|\leqslant (b(x)\,|v|)^{p-1}+\fff(x)^{p-1}\,.\]
and $b\in L^N(\Omega)$ (see also \cite{BoBumi}). 
We can easily show that the operator $\FFF$ defined in \eqref{risolvente} is compact, also for $b\in L^N(\Omega)$. Indeed, equation \eqref{equazione-v} in this case becomes
\begin{equation}\label{equazione-v particolare}
-\divergenza A'(x,\nabla u)=\Phi+\divergenza A''(x,v)\,.
\end{equation}
Defined $\vartheta_n$ as in \eqref{theta}, each mapping
\[v\in \SobpO\mapsto A''(x,\vartheta_n\,v)\in L^{p'}(\Omega,\reale^N)\]
is clearly compact. Moreover,
\begin{equation}\label{202005253}
|A''(x,v)-A''(x,\vartheta_n\,v)|\leqslant 2[(b\,|v|)^{p-1}+\fff^{p-1}]\,\chi_{E_n}\,,
\end{equation}
where
\[E_n=\{x\in \Omega:|b(x)|>n\}\,.\]
Therefore, as $n\to+\infty$ we have
\[A''(x,\vartheta_n\,v)\to A''(x,v)\qquad \text{strongly in }L^{p'}(\Omega,\reale^N)\,,\]
the convergence being uniform when $v$ varies in a bounded subset of $\SobpO$, and compactness is preserved for the limit mapping.\par
An a~priori bound for solutions of equation
\[u=t\,\FFF[u]\]
can be easily obtained as above, splitting $b\in L^N(\Omega)$ as
\[b=T_m b+(b-T_mb)\]
for a sufficiently large $m$. Therefore, in this particular case the existence result of Theorem~\ref{main} follows simply applying Leray--Schauder fixed point theorem.\par
\end{remark}

\section{The obstacle problem}\label{pb ostacolo}
This section is devoted to the obstacle problem naturally related with problem   
\eqref{Dirichlet} (see \cite{KinStam} for a comprehensive treatment of the topic).
We again assume that 
\eqref{coercivita}, \eqref{limitatezza} and \eqref{monotonia} 
are in charge 
and we let $\Phi \in W^{-1,p}(\Omega)$.
Given a measurable function $\psi\colon \Omega  \rightarrow \overline {\mathbb  R}$,
where $\overline {\mathbb  R}:=[-\infty,\infty]$, we consider the convex subset 
of $\mathcal K_\psi(\Omega)$ of 
$
W_0^{1,p}(\Omega)
$ 
given by
\begin{equation}
\mathcal K_\psi (\Omega):=
\left\{
w \in W_0^{1,p}(\Omega) \colon
\,
w \geqslant \psi \text{ a.e.\ in }\Omega
\right\}.
\end{equation}
We will assume that $\mathcal K_\psi(\Omega)$ is nonempty. 
An element $u \in \mathcal K_\psi (\Omega)$ is a solution to the obstacle problem associated with \eqref{Dirichlet} if  the following  variational inequality holds
\begin{equation}\label{obst}
\begin{split}
\int_{\Omega }
\langle 
 A(x,u,\nabla u) ,   \nabla (w-u)  \rangle
\dx x
\geqslant
\langle \Phi,
 w-u
\rangle\,,
\qquad \forall w \in \mathcal K_\psi(\Omega)\,.
\end{split}
\end{equation}
As
$\mathcal K_\psi(\Omega)
\neq \emptyset
$,
we may assume without loss of generality that 
\begin{equation}\label{nonneg}
\text{$\psi\leqslant 0$ a.e. in $\Omega$.}
\end{equation}
In fact, if $g \in \mathcal K  _\psi(\Omega)$, then one can consider the operator defined by the vector field
\[
\tilde A(x,u,\xi):=A(x,u+g(x),\xi+\nabla g(x))\,,
\]
satisfying conditions similar to 
\eqref{coercivita},
\eqref{limitatezza} and
\eqref{monotonia}.
Now it is clear that, if  function
$\tilde u \in {\mathcal  K}  _{\psi-g}(\Omega)$ satisfies 
the following 
variational inequality
\begin{equation}\label{obst tilde}
\begin{split}
\int_{\Omega }
\langle \tilde A(x,\tilde u,\nabla \tilde u)  , \nabla (w-\tilde u)  \rangle
\dx x
\geqslant 
\langle \Phi,
 w-\tilde u
\rangle
\qquad \forall w \in  {\mathcal  K}  _{\psi-g}(\Omega)
\end{split}
\end{equation}
correspondingly 
$u = \tilde u + g$ is a solution to \eqref{obst}.  Notice that the obstacle function for problem \eqref{obst tilde} 
is nonpositive, as we are assuming for the original problem.

\begin{theorem}\label{ob theorem}
Let $\Phi\in\SobdualO$
and $\psi:\Omega\rightarrow [-\infty,0]$ be a measurable function. Under the assumption
\eqref{coercivita},
\eqref{limitatezza} and
\eqref{monotonia},
if \eqref{lndeboleinfty} holds,
then the ostacle problem~\eqref{obst} admits  a solution.  
\end{theorem}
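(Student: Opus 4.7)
The plan is to adapt the two-stage strategy of Theorem~\ref{main} to the constrained setting: first treat the case $b\in L^\infty(\Omega)$ by Leray--Schauder, then reduce the general case by truncating $b$. The constraint $w\in\mathcal K_\psi(\Omega)$ restricts admissible test functions, but the hypothesis $\psi\leqslant 0$ makes several natural choices admissible. Direct inspection shows that $\tilde u\pm T_\sigma(\tilde u^\pm)\in\mathcal K_\psi(\Omega)$ for every $\tilde u\in\mathcal K_\psi(\Omega)$, while $\min(u_n,u),\max(u_n,u)\in\mathcal K_\psi(\Omega)$ for every pair $u_n,u\in\mathcal K_\psi(\Omega)$; these are the two families of test functions that will replace the $\gamma(u_n-u)$ variations used in Subsection~\ref{pass lim}.

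When $b\in L^\infty(\Omega)$, I would proceed as in Subsection~\ref{coefficienti limitati}. For each fixed $v\in L^p(\Omega)$ the operator $u\mapsto -\divergenza A_v(x,\nabla u)$ of \eqref{loperatore-v} is coercive, bounded and strictly monotone, so the classical Stampacchia--Lions theory produces a unique $u\in\mathcal K_\psi(\Omega)$ solving the variational inequality associated to $A(x,v,\cdot)$; this defines a map $\FFF\colon v\mapsto u$. Continuity of $\FFF$ on $L^p(\Omega)$ can be established by the same arguments used for Proposition~\ref{la proposizione}, and Rellich produces compactness of $\FFF$ as a self-map of $\SobpO$. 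For solutions $u=t\FFF(u)$ of the Leray--Schauder equation with $t\in(0,1]$, I would test the variational inequality satisfied by $\tilde u:=u/t=\FFF(u)\in\mathcal K_\psi(\Omega)$ with the admissible functions $w=\tilde u-T_\sigma(\tilde u^+)$ and $w=\tilde u+T_\sigma(\tilde u^-)$; summing and invoking coercivity \eqref{coercivita} yields
\[
\alpha\int_\Omega|\nabla T_\sigma \tilde u|^p\dx x\leqslant \|\Phi\|\,\|\nabla T_\sigma\tilde u\|_p+\|b\|_\infty^p\int_{\{|\tilde u|<\sigma\}}|\tilde u|^p\dx x+\|\fff\|_p^p,
\]
an estimate of the exact form required by Lemma~\ref{lemma weak compact}. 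This produces a uniform bound on $\|\tilde u\|_{W^{1,p}}$, hence on $\|u\|=t\|\tilde u\|$, and Theorem~\ref{LerSch2} gives a fixed point.

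For general $b\in L^{N,\infty}(\Omega)$ satisfying~\eqref{lndeboleinfty}, I would introduce the truncated operators $A_n$ of \eqref{theta2} and apply the previous step to obtain solutions $u_n\in\mathcal K_\psi(\Omega)$ of the variational inequality associated to $A_n$. The uniform bound $\|u_n\|_{W^{1,p}}\leqslant M$ is obtained as in Subsection~\ref{approssimazione}: the splitting $T_nb\leqslant T_mb+(b-T_mb)$, H\"older in Lorentz spaces and the smallness condition~\eqref{lndeboleinfty} (for $m$ sufficiently large) permit the absorption performed in \eqref{stima vkn3bis}--\eqref{3.23bis}, only the $T_\sigma$-type test functions being needed. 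Up to a subsequence, $u_n\rightharpoonup u$ weakly in $\SobpO$, strongly in $L^p(\Omega)$ and a.e.; passing to the a.e.\ limit in $u_n\geqslant\psi$ gives $u\in\mathcal K_\psi(\Omega)$.

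The main difficulty lies in identifying $u$ as a solution of~\eqref{obst}. I would follow the Minty scheme: testing the variational inequality for $u_n$ with $w=\max(u_n,u)$ and $w=\min(u_n,u)$ (both in $\mathcal K_\psi(\Omega)$) and summing, the two right-hand sides combine into $\langle\Phi,u_n-u\rangle\to 0$, yielding
\[
\limsup_{n\to\infty}\int_\Omega\langle A_n(x,u_n,\nabla u_n),\nabla u_n-\nabla u\rangle\dx x\leqslant 0.
\]
The matching $\liminf\geqslant 0$ will follow from monotonicity \eqref{monotonia n} once one has the $L^{p'}$--strong convergence $A_n(x,u_n,\nabla u)\to A(x,u,\nabla u)$; this is the true counterpart of the compactness step~\eqref{compattezza} and is the hardest step. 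Pointwise convergence is immediate from $u_n\to u$ a.e.\ and $\vartheta_n\to 1$, so the issue is equi-integrability of the critical term $(T_n b\,|u_n|)^{p}$. To handle it I would again split $b=T_mb+(b-T_mb)$: the bounded part $(T_mb\,|u_n|)^p$ inherits equi-integrability from $|u_n|^p$, while the tail part is controlled through H\"older/Sobolev in Lorentz spaces by $S_{N,p}^p\|b-T_mb\|_{N,\infty}^p \|\nabla u_n\|_p^p$, arbitrarily small by \eqref{lndeboleinfty} for $m$ large; applying Vitali delivers the strong convergence. Once this is in place, the Leray--Lions argument of \cite[Lemma~3.3]{LL} gives $\nabla u_n\to\nabla u$ a.e.\ and $A_n(x,u_n,\nabla u_n)\rightharpoonup A(x,u,\nabla u)$ weakly in $L^{p'}(\Omega,\Rn)$, exactly as in \eqref{202005021}--\eqref{29.6}, and passing to the limit in the variational inequality for $u_n$ against an arbitrary fixed $w\in\mathcal K_\psi(\Omega)$ produces~\eqref{obst}.
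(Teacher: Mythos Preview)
Your overall architecture matches the paper's: Leray--Schauder for bounded $b$, truncation to $A_n$, uniform bounds via Lemma~\ref{lemma weak compact}, then passage to the limit. Your test functions $\tilde u\pm T_\sigma(\tilde u^\pm)$ and $u_n-T_\sigma u_n$ are equivalent choices for the a~priori estimates, and the $\min/\max$ test functions are admissible in $\mathcal K_\psi(\Omega)$ as you claim.

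The gap is in your compactness step. You assert that $A_n(x,u_n,\nabla u)\to A(x,u,\nabla u)$ strongly in $L^{p'}$ via Vitali, controlling the tail $((b-T_mb)|u_n|)^p$ by $S_{N,p}^p\|b-T_mb\|_{N,\infty}^p\|\nabla u_n\|_p^p$ and claiming this is ``arbitrarily small by~\eqref{lndeboleinfty} for $m$ large''. This is false: by~\eqref{distlim}, $\|b-T_mb\|_{N,\infty}$ converges to $\dist_{L^{N,\infty}}(b,L^\infty)$, which under~\eqref{lndeboleinfty} is bounded by $\alpha^{1/p}/S_{N,p}$ but is in general strictly positive. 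Your argument would go through only under the stronger hypothesis $b\in\chiusura N(\Omega)$ of Corollary~\ref{corollary}. Example~\ref{Ex2} is designed precisely to show that the strong $L^{p'}$ convergence you need can fail: there $u_n\to0$ strongly in $L^p$, $\{u_n\}$ is bounded in $\SobpO$, yet $(b|u_n|)^{p-1}$ does not converge to $0$ in $L^{p'}$. Without this strong convergence your Minty inequality $\liminf\geqslant0$ collapses, and nothing downstream survives.

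The paper avoids this by testing with $w=u_n-\gamma(u_n-v)$, $\gamma(s)=\lambda\arctan(s/\lambda)$, which is admissible in $\mathcal K_\psi(\Omega)$ for every $v\in\mathcal K_\psi(\Omega)$. The point is that the extra factor $\gamma'(u_n-v)=\lambda^2/(\lambda^2+|u_n-v|^2)$ damps the dangerous term: one only needs strong $L^{p'}$ convergence of $\gamma'(u_n-v)\,A_n(x,u_n,\nabla v)$, and the bound then involves $(b|u_n-v|)^p/(1+|u_n-v|^2)$, which \emph{is} equi-integrable (bounded by $Cb^p$ if $p\leqslant2$, and handled by a Young splitting if $p>2$, exactly as in~\eqref{compattezza}). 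After obtaining a.e.\ convergence of gradients this way (with $v=u$, $\lambda=1$), the paper passes to the limit in the inequality for general $v$ and $\lambda$ by Fatou on the monotone part, and finally lets $\lambda\to\infty$ to remove the $\gamma$. Your $\min/\max$ variations carry no such damping, which is why they cannot close the argument under~\eqref{lndeboleinfty} alone.
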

\begin{proof}
We follow closely the arguments of 
Section \ref{Section3}. 
For each $n\in \naturale$, we consider the function $\vartheta_n$ as in \eqref{theta}
and define the vector fields
$
A_n=A_n (x,u,\xi) 
$
as in \eqref{theta2}.
We consider a sequence of obstacle problems provided by
\begin{equation}\label{obst n}
\begin{split}
\int_{\Omega }
\langle  A_n(x,u_n,\nabla u_n)  ,  \nabla (w-u_n)   \rangle
\dx x
\geqslant 
\langle \Phi,
w-u_n
\rangle\,,
\qquad \forall w \in \mathcal K_\psi(\Omega)\,.
\end{split}
\end{equation}
The existence of a 
solution $u_n \in \mathcal K_\psi(\Omega)$ to \eqref{obst n} is proven applying \cite[Th\'eor\`eme~8.2, pg.~247]{L} to the operator
\[-\divergenza A_n(x,v,\nabla u)\,,\]
for a fixed $v\in \SobpO$, and then using Leray--Schauder Theorem, arguing as in Subsection~\ref{coefficienti limitati}. Due to \eqref{nonneg},
for every
$ k  >0$
 the function
\[
w:=u_n - T_k u_n \in \mathcal K_\psi(\Omega) 
\]
is a test function for 
\eqref{obst n}.
Arguing 
as in Section \ref{approssimazione} we obtain 
\[
\|
u_n
\|\leqslant M
\]
with $M$ independent of $n$ (as in \eqref{3.23bis}). Therefore 
\eqref{22} holds
for some $u \in W^{1,p}_0(\Omega)$. It is clear from \eqref{22} itself that
\begin{equation}\label{u in K}
 u \in \mathcal K_\psi(\Omega) 
\end{equation}
As for Theorem \ref{main}, we shall  prove that $u$ is a solution to the original problem \eqref{obst}. We proceed as follows. We use
\begin{equation}\label{arct ob}
w:=u_n - \gamma(u_n-v)
\end{equation}
in \eqref{obst n}, where $\gamma (s)=\lambda\arctan(s/\lambda)$, for $\lambda>0$, and $v\in \mathcal K_\psi(\Omega)$ is arbitrary. Note that this is a legitimate test function, that is $w\in \mathcal K_\psi(\Omega)$. Indeed, on the set where $u_n\geqslant v$ we have
$\gamma(u_n-v)\leqslant u_n-v$ and so $w \geqslant v$; on the other hand, on the set where $u_n\leqslant v$ we have
$ \gamma  (u_n-v)\leqslant 0$ and so $w\geqslant u_n$. Therefore, from \eqref{obst n} we get
\begin{equation}\label{var in n}
\int_\Omega A_n(x,u_n,\nabla u_n) \nabla \gamma(u_n-v)\dx x \leqslant \left\langle \Phi , \gamma(u_n-v) \right \rangle\,.
\end{equation}
Following the lines of the proof of Theorem \ref{main} (where $\lambda=1$), we get  in turn \eqref{27}, \eqref{29.4} and finally \eqref{29.6}. To pass to the limit for fixed general $\lambda>0$ in \eqref{var in n}, we rewrite it as follows:
\begin{equation}\label{var in n 2}
\begin{split}
\int_\Omega [A_n(x,u_n,\nabla u_n)&-A_n(x,u_n,\nabla v)]\,\nabla \gamma(u_n-v)\dx x \\
&\leqslant \left\langle \Phi , \gamma(u_n-v) \right \rangle-\int_\Omega A_n(x,u_n,\nabla v)\nabla \gamma(u_n-v)\dx x \,.
\end{split}
\end{equation}
In the left hand side we use Fatou lemma, as by monotonicity condition \eqref{monotonia} the integrand is nonnegative. In the right hand side, we note that $A_n(x,u_n,\nabla v)\,\gamma'(u_n-v)$ converges to $A (x,u ,\nabla v)\,\gamma'(u-v)$ in $L^{p'}$, compare with \eqref{compattezza} where we did not use that $u_n\to u$. Hence, we deduce from \eqref{var in n 2} 
\[\begin{split}
\int_\Omega [A(x,u,\nabla u)&-A(x,u,\nabla v)]\,\nabla \gamma(u-v)\dx x \\
&\leqslant \left\langle \Phi , \gamma(u-v) \right \rangle-\int_\Omega A(x,u,\nabla v)\nabla \gamma(u-v)\dx x \,,
\end{split}
\]
that is
\begin{equation}\label{var in n 3}
\int_\Omega A(x,u,\nabla u)\,\nabla \gamma(u-v)\dx x\leqslant \left\langle \Phi , \gamma(u-v) \right \rangle\,.
\end{equation}
Now we let $\lambda\to\infty$ in \eqref{var in n 3}, noting that $\gamma(u-v)\to u-v$ strongly in $\SobpO$. Therefore, we get
\[\int_\Omega A(x,u,\nabla u)\,\nabla (u-v)\dx x \leqslant \left\langle \Phi , u-v \right \rangle\,,\]
for all $v\in K_\psi(\Omega)$, which means exactly that $u$ is a solution to our obstacle problem.
\end{proof}
\begin{remark}
Clearly, Theorem \ref{ob theorem} is more general than Theorem \ref{main} since we are allowed to choose $\psi \equiv -\infty$.
Indeed, in this case, the obstacle problem \eqref{obst} reduces to \eqref{Dirichlet}.
\end{remark}

\section{Regularity of the solution}\label{reg sol}
In this Section, following \cite{GMZ} we study regularity of the problem \eqref{Dirichlet}. 

\begin{theorem}
Let $1 < p<r<N$ and
$\Phi \in W^{-1, \frac {r}{p-1}  } (\Omega)$.
Assume that 
\eqref{coercivita},
\eqref{limitatezza} and
\eqref{monotonia}  hold  with
$\varphi \in L^r(\Omega)$. Under these hypotheses, there exists $\delta=\delta(\alpha, N,p,r)>0$ such that   
if
\begin{equation}\label{lndeboleinfty r} 
\dist_{L^{N,\infty}} (b,L^\infty (\Omega))<\delta,
\end{equation}
then any solution $u \in W^{1,p}_0(\Omega)$ of \eqref{Dirichlet} 
satisfies 
\begin{equation}\label{6.2}
|u|^{r^\ast/p^\ast} \in W^{1,p}_0(\Omega)
\end{equation}
In particular $u \in L^{r^\ast} (\Omega)$.
\end{theorem}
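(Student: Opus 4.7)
The goal is to show $v:=|u|^{r^*/p^*}\in W^{1,p}_0(\Omega)$; Theorem~\ref{lorentz} then yields $u\in L^{r^*}(\Omega)$. Set $s:=r^*/p^*$, which exceeds $1$ since $r>p$. For each $\lambda>0$ let $\psi_\lambda$ be the odd Lipschitz function with $\psi'_\lambda(t)=\min(|t|,\lambda)^{p(s-1)}$, so that $\psi_\lambda(u)\in W^{1,p}_0(\Omega)$ is admissible in \eqref{soluzint} (note $r/(p-1)\geqslant p'$, so $\Phi$ pairs with $W^{1,p}_0$). Testing \eqref{soluzint} with $\psi_\lambda(u)$ and using \eqref{coercivita},
\begin{equation*}
\alpha\,\mathcal I_\lambda\leqslant\int\psi'_\lambda(u)(b|u|)^p\,\dx x+\int\psi'_\lambda(u)\fff^p\,\dx x+\langle\Phi,\psi_\lambda(u)\rangle,\quad \mathcal I_\lambda:=\int\psi'_\lambda(u)|\nabla u|^p\,\dx x.
\end{equation*}

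The algebraic engine is the auxiliary $g_\lambda:=\min(|u|,\lambda)^{s-1}|u|\in W^{1,p}_0(\Omega)$, which satisfies $g_\lambda^p=\psi'_\lambda(u)|u|^p$ and $|\nabla g_\lambda|^p\leqslant s^p\psi'_\lambda(u)|\nabla u|^p$, hence $\|\nabla g_\lambda\|_p^p\leqslant s^p\mathcal I_\lambda$. H\"older in Lorentz spaces and Theorem~\ref{lorentz} give $\int b^p g_\lambda^p\leqslant S_{N,p}^p\|b\|_{N,\infty}^p\|\nabla g_\lambda\|_p^p\leqslant s^pS_{N,p}^p\|b\|_{N,\infty}^p\mathcal I_\lambda$. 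Splitting $b=T_mb+(b-T_mb)$ and using \eqref{distlim} and \eqref{lndeboleinfty r} with $\delta$ of order $\alpha^{1/p}/(sS_{N,p})$, the $(b-T_mb)$-contribution is absorbed into $\alpha\mathcal I_\lambda$. For the $\fff$-term, H\"older with exponents $r/p$ and $r/(r-p)$ together with the identity $p(s-1)\cdot r/(r-p)=r^*$ and Sobolev give $\int\psi'_\lambda(u)\fff^p\leqslant C\|\fff\|_r^p\|\nabla v_\lambda\|_p^{p(s-1)/s}$ where $v_\lambda:=\min(|u|,\lambda)^s$; since $p(s-1)/s<p$, Young absorbs. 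For the $\Phi$-term we write $\Phi=-\divergenza F$ with $F\in L^{r/(p-1)}(\Omega,\mathbb R^N)$ and bound $|\langle\Phi,\psi_\lambda(u)\rangle|$ by H\"older as a multiple of $\|\nabla v_\lambda\|_p^{p(s-1)/(sp')}\mathcal I_\lambda^{1/p}$; the joint exponent in $\mathcal I_\lambda$ equals $(ps-p+1)/(sp)<1$, so Young absorbs again.

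I anticipate the main obstacle to be the residual $2^{p-1}m^p\int g_\lambda^p$ from the $T_mb$ piece, which does not absorb into the principal part. The natural bound $\int g_\lambda^p\leqslant\int|u|^{ps}$ is uniform in $\lambda$ only under the a~priori integrability $u\in L^{ps}$; this holds automatically from $u\in L^{p^*}$ precisely when $ps\leqslant p^*$, i.e.\ $r\leqslant N^2p/(N^2-Np+p^2)$. For the general range $p<r<N$ my plan is a finite bootstrap: first apply the one-shot argument to a moderate target $s_1\leqslant p^*/p$, gaining $u\in L^{s_1p^*}$, and iterate with growing targets $s_2<s_3<\cdots<s_K=s$, each step powered by the improved integrability produced at the previous one. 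Since $p^*/p>1$ the exponent grows geometrically and only finitely many iterations (depending on $r$) are needed; the smallness $\delta=\delta(\alpha,N,p,r)$ is dictated by the most restrictive step, where $s_k$ attains its maximum $s$.

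With $\mathcal I_\lambda$ bounded uniformly in $\lambda$, monotone convergence of $\min(|u|,\lambda)^{p(s-1)}|\nabla u|^p\nearrow|u|^{p(s-1)}|\nabla u|^p$ yields $\int|u|^{p(s-1)}|\nabla u|^p\,\dx x<\infty$, so $v=|u|^s\in W^{1,p}_0(\Omega)$, proving \eqref{6.2}.
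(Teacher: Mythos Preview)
Your proof is correct and follows the same overall strategy as the paper: test the equation with a truncated power of $u$, split $b=T_mb+(b-T_mb)$, absorb the tail piece via H\"older--Sobolev in Lorentz spaces under the smallness condition~\eqref{lndeboleinfty r}, and bootstrap when the one-shot threshold fails. The paper arrives at the same power-weighted inequality by testing with $u-T_ku$ and then integrating in $k$ via Fubini, rather than writing down the primitive $\psi_\lambda$ directly; this is only a cosmetic difference.

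The one substantive variation is the treatment of the $T_mb$-residual. The paper groups the term $M|u|$ together with $|F|$ and $\fff$ into a single function $G$, applies H\"older so that the truncated factor lands in $L^{r^*}$, and closes via Sobolev; this needs only $u\in L^r$, so the one-shot step works whenever $r\leqslant p^*$ and the bootstrap proceeds along $r\mapsto p^*\mapsto p^{**}\mapsto\cdots$. You instead bound $m^p\int g_\lambda^p\leqslant m^p\int|u|^{ps}$ as a fixed constant, which requires the stronger a~priori integrability $u\in L^{ps}$ (note $ps>r$). Hence your bootstrap starts from a smaller threshold and needs more iterations---but since each step multiplies the available Lebesgue exponent by $p^*/p>1$, it still terminates in finitely many steps, and the most restrictive absorption indeed occurs at $s_K=s$, giving $\delta$ of the same order as the paper's explicit value $\alpha^{1/p}p^*/(S_{N,p}\,r^*)$.
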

\begin{proof}
Let $u \in W^{1,p}_0(\Omega)$ be a solution 
of  \eqref{Dirichlet}. We may write $\Phi \in W^{-1,\frac {r}{p-1}} (\Omega)$ as
\[\Phi=\divergenza(|F|^{p-2} F)\]
for a suitable $F \in L^{r} (\Omega,\mathbb R^N)$.\par
For fixed $k>0$, we use $v:=u-T_k u$ as a test function  in  \eqref{soluzint} to get
\begin{equation}\label{202005221}
\alpha\int_{\Omega_k}|\nabla u|^p\dx x\leqslant \int_{\Omega_k}|F|^{p-1}\,|\nabla u|\dx x+\int_{\Omega_k}(b^p|u|^p+\fff^p)\dx x
\end{equation}
where $\Omega_k$ denotes the superlevel set $\{|u|>k\}$. For $0<\ee<\alpha$, by Young inequality we get
\begin{equation}\label{202005226}
(\alpha-\ee)\int_{\Omega_k}|\nabla u|^p\dx x\leqslant \int_{\Omega_k}(C\,|F|^p+b^p|u|^p+\fff^p)\dx x
\end{equation}
with $C=C(p,\ee)>0$. We let
\begin{equation}\label{202005227}
\lambda=\frac{r^*}{p^*}-1
\end{equation}
and multiply both sides of \eqref{202005226} by $k^{p\lambda-1}$ and integrate w.r.t.\ $k$ over the interval $[0,K]$, for $K>0$ fixed. By Fubini theorem we have
\begin{equation}\label{202005222}
(\alpha-\ee)\int_{\Omega}|\nabla u|^p\,|T_Ku|^{p\lambda}\dx x\leqslant \int_{\Omega}(C\,|F|^p+b^p|u|^p+\fff^p)\,|T_Ku|^{p\lambda}\dx x
\end{equation}
which implies
\begin{equation}\label{202005234}
(\alpha-\ee)^{\frac1p}\|\nabla u\,|T_Ku|^{\lambda}\|_p\leqslant C\|F\,|T_Ku|^{\lambda}\|_p+\|b\,u\,|T_Ku|^{\lambda}\|_p+\|\fff\,|T_Ku|^{\lambda}\|_p
\end{equation}
For $M>0$ we write
\begin{equation}\label{202005223}
\|b\,u\,|T_Ku|^\lambda\|_p\leqslant \|(b-T_Mb)\,u\,|T_Ku|^\lambda\|_p+M\|u\,|T_Ku|^\lambda\|_p
\end{equation}
By H\"older inequality and Sobolev embedding Theorem \ref{lorentz}
\begin{equation}\label{202005224}
\begin{array}{rl}
\ds \|(b-T_Mb)\,u\,|T_Ku|^\lambda\|_p\kern-.7em &\ds \leqslant \|b-T_M b\|_{N,\infty}\,\|u\,|T_Ku|^\lambda\|_{p^*,p}\vspace{10pt}\\
&\ds \leqslant \|b-T_M b\|_{N,\infty}\,S_{N,p}\,\|\nabla(u\,|T_Ku|^\lambda)\|_p
\end{array}
\end{equation}
Moreover,
\begin{equation}\label{202005225}
|\nabla(u\,|T_Ku|^\lambda)|\leqslant (1+\lambda)\,|\nabla u|\,|T_Ku|^\lambda
\end{equation}
Therefore
\begin{equation}\label{202005228}
\|(b-T_Mb)\,u\,|T_Ku|^\lambda\|_{L^p(\Omega)}\leqslant \|b-T_M b\|_{N,\infty}\,S_{N,p}\,(1+\lambda)\,\|\nabla u\,|T_Ku|^\lambda\|_p
\end{equation}
Under the assumption
\begin{equation}\label{202005229}
\|b-T_M b\|_{N,\infty}\,S_{N,p}\,(1+\lambda)<\alpha^{\frac1p}
\end{equation}
choosing $\ee$ small enough we get from \eqref{202005234}
\begin{equation}\label{2020052210}
\|\nabla u\,|T_Ku|^{\lambda}\|_p\leqslant C\,\|G\,|T_Ku|^{\lambda}\|_p
\end{equation}
with $C=C(p,r,M,\alpha)>0$, where we set
\begin{equation}\label{202005231}
G^p=|F|^p+|u|^p+\fff^p\,.
\end{equation}\par
We first show the claim under the additional assumption $u\in L^r(\Omega)$, so that $G\in L^r(\Omega)$. By H\"{o}lder inequality we have
\begin{equation}\label{I1}
\|G\,|T_Ku|^{\lambda}\|_p\leqslant
\|G\|_r\,\|T_K u\|^{\lambda}_{\lambda\,\frac {rp}{r-p}}
\end{equation}
From \eqref{202005227} we get
\begin{equation}\label{beta2}
\lambda\,\frac {rp}{r-p}=r^*\,.
\end{equation}
Hence, by Sobolev embedding theorem we have
\begin{equation}\label{I1bis}
\begin{array}{rl}
\ds \|T_K u\|^{\lambda}_{\lambda\,\frac {rp}{r-p}}\kern -.7em &\ds=\|T_K u\|^{\lambda}_{r^*}=\||T_K u|^{\frac{r^*}{p^*}}\|^{\lambda\frac{p^*}{r^*}}_{p^*}\leqslant C\,\|\nabla |T_K u|^{\frac{r^*}{p^*}}\|^{\lambda\frac{p^*}{r^*}}_{p}
\\
&\ds\leqslant C\,\|\nabla u\,|T_Ku|^{\lambda}\|_p^{\frac\lambda{\lambda+1}}
\end{array}
\end{equation}
Then,  combining \eqref{2020052210}, \eqref{I1} and \eqref{I1bis}, we get
\begin{equation}\label{5.7bis}
\|\nabla u\,|T_Ku|^{\lambda}\|_p^{\frac{p^*}{r^*}}\leqslant C\,\|G\|_r
\end{equation}
Passing to the limit as $K\rightarrow + \infty$ and recalling \eqref{202005231}, 
we have
\begin{equation}\label{202005232}
\|\nabla u\,|u|^{\lambda}\|_p^{\frac{p^*}{r^*}}\leqslant C\,(\|F\|_r+\|\fff\|_r+\|u\|_r)
\end{equation}
that is
\begin{equation}\label{202005233}
\|\nabla |u|^{\frac{r^*}{p^*}}\|_p\leqslant C\,(\|F\|_r+\|\fff\|_r+\|u\|_r)^{\frac{r^*}{p^*}}
\end{equation}
Hence, \eqref{6.2} holds as long as $u \in L^r(\Omega)$. At this point we observe that if 
$r\leqslant p^\ast$, using the
Sobolev embedding theorem,  $u\in L^{p^*}(\Om)$ and the proof is concluded.  
In the complementary case $r > p^\ast$, we use a bootstrap approach. Precisely, we repeat the previous 
argument replacing  $r$ with $p^\ast$ to get $u \in L^{p^{\ast\ast}}(\Omega)$. Using this information,  if  $r\leqslant p^{\ast\ast}  $, there is nothing left to prove. Otherwise we repeat previous argument again. In a finite number of  similar steps we can conclude our proof.
\end{proof}
\begin{remark}
In view of \eqref{202005229}, we may take
\[\delta=\frac {\alpha^\frac1p}{S_{N,p}}\,\frac{p^*}{r^*}\]
in \eqref{lndeboleinfty r}. Since $r\mappaa r^*$ is increasing, a similar condition to~\eqref{202005229} will hold in all intermediate steps, in case we need the bootstrap argument.  Note that $\delta$ reduces to the distance in \eqref{lndeboleinfty}, for $r=p$. 
\end{remark}



%
%



\end{document}